\g@addto@macro{\endabstract}{\@setabstract}
\newcommand{\authorfootnotes}{\renewcommand\thefootnote{\@fnsymbol\c@footnote}}%
\newcommand{\mathsym}[1]{{}}
\newcommand{\unicode}[1]{{}}
\newtheorem{thm}{Theorem}
\newtheorem*{theorem*}{Theorem}
\newtheorem{lem}{Lemma}
\newtheorem{prop}{Proposition}
\theoremstyle{remark}
\newtheorem{defn}{Definition}
\def\<{\langle}
\def\>{\rangle}
\def\0b{\mathbf{0}}
\def\M{\mathcal{M}}
\def\Z{\mathbb{Z}}
\def\id{\mathrm{id}}
\def \go{||b_1-b_2||^{\alpha}}
\def\Me{\mathcal{M}_{\varepsilon}}
\def \supe {\sup_{x,y \in [0,\varepsilon]}}
\def \Hae  {\mathcal{H}^{\alpha}_{\varepsilon}}
\def \lb {\lambda_b}
\def \Td {\mathbb{T}^d}
\def\Lip{\mathrm{Lip}}
\def\<{\langle}
\def\>{\rangle}
\title[Sternberg linearization theorem for skew products]
{Sternberg linearization theorem for skew products}
\begin{document}
\maketitle
\begin{center}
\authorfootnotes
YULIJ ILYASHENKO \footnote{email: yulij@math.cornell.edu}\textsuperscript{1,2},
OLGA ROMASKEVICH\footnote{email:olga.romaskevich@ens-lyon.fr}\textsuperscript{1,3} \par \bigskip
 \textsuperscript{1} National Research University Higher School of Economics, Moscow \\
  \textsuperscript{2} Cornell University
  \textsuperscript{3} \'{E}cole Normale Sup\'{e}rieure de Lyon
  \end{center}

\begin{abstract}
We present a new kind of normalization theorem: linearization theorem for skew products. The normal form is a skew product again, with the fiber maps linear. It appears, that even in the smooth case, the conjugacy is only H\"older continuous with respect to the base. The normalization theorem mentioned above may be applied to perturbations of skew products and to the study of new persistent properties of attractors.
\end{abstract}

\section{Setting and statements}

\subsection{Motivation}\label{section_motivation}

This paper is devoted to a normalization theorem for H\"older skew products. We begin with the motivation for the choice of this class of maps.

According to a heuristic principle going back to \cite{program}, generic phenomena that occur in random dynamical systems on a compact manifold may also occur for diffeomorphisms of manifolds of higher dimensions. Random dynamical systems are equivalent to skew product homeomorhisms over Bernoulli shift. Some new effects found for these homeomorphisms were transported later to skew product diffeomorphisms over hyperbolic maps with compact fibers. These diffeomorphisms are in no way generic. Their small perturbations are skew products again, whose fiber maps are smooth but only continuous with respect to the base point \cite{HPS}.

Recently it was discovered that these fiber maps are in fact H\"older with respect to the base point \cite{G, IN, Revisited}.

New effects found for skew product diffeomorphisms are then transported to H\"older skew products, and thus proved to be generic. This program was carried on in \cite{four, Skew, Thick, Intermingled, kv, urkud}.

This motivates the study of H\"older skew products. We now pass to our main results.

\subsection{Main statements}
Consider a skew product diffeomorphism over an Anosov map in the base with the fiber a segment. In more detail, let $M=\mathbb{T}^d \times I$, $\mathbb{T}^d$ is a a $d$-dimensional torus, $I=[0,1]$. Consider a boundary preserving skew product

\begin{equation}\label{eq:new:def_skew}
F: M \rightarrow M, (b,x) \mapsto \left( Ab, f_b(x) \right),
\end{equation} where $f_b (0)=0, f_b(1)=1$, the fiber map $f_b$ is an orientation preserving diffeomorphism $I \rightarrow I$ and the base map $A$ is a linear hyperbolic automorphism of a torus.

Suppose also that $f$ is H\"older continuous in $x$ with respect to the $C^k$-norm; i.e. that there exist constants $C_k, \beta>0$ such that for any $b, b' \in \mathbb{T}^d$ the following holds:

\begin{equation}\label{eq:new:Holder_f}
||f_b-f_{b'}||_{C^k} \leq C_k ||b-b'||^{\beta}
\end{equation}

This assumption appears in slightly different settings as a statement in a number of articles on partial hyperbolicity: for exmple, in  \cite{Revisited} the estimate \eqref{eq:new:Holder_f} is true for $k=0$, in  \cite{horseshoe} for $k=1$ and in \cite{G} for any $k$. Now we will state the main results that we are proving in a hope to apply them to the study of the skew products: for example, to drastically simplify the proofs in \cite{Thick}.

\begin{thm}\label{thm:new:1}
Consider a map $F$ of the form \eqref{eq:new:def_skew} with the property \eqref{eq:new:Holder_f} for some fixed $k$ and $C^k$ -- smooth on the $x$ variable,  $k \geq 2$. Let $O \in \mathbb{T}^d \times \{ 0\}$ be its hyperbolic fixed point. Then there exists a neighborhood $U$ of $O$ and a fiber preserving homeomorphism
\begin{equation}\label{eq_conjugacy_form}
H: (U, O) \rightarrow U,\;\; (b,x) \mapsto (b, x+h_b(x)) \;\; h_b(0)=\frac{\partial h_b} {\partial x}(0)=0,
\end{equation}
 such that
\begin{itemize}
\item[1.] $H$ conjugates $F$ in $(U, O)$ with its "fiberwize linearization"
\begin{equation}\label{eq_linearization_form}
F_0: (b,x) \mapsto (Ab, \lambda_b x),
\end{equation}
where \begin{equation}
\lambda_b=f_b'(0).
\end{equation}
This means that
\begin{equation}\label{eq_equation_conjugacy}
F \circ H = H \circ F_0.
\end{equation}

\item[2.] $H$ is smooth on $x$ for $b$ fixed: the degree of smoothness is $k-2$
\item[3.] $H$ is fiberwise H\"older: there exist constants $\tilde{C}_l, \alpha>0$ such that for any $l, 0 \leq l \leq k-2$ holds
\begin{equation}\label{eq:new:h4}
||h_b-h_{b'}||_{C^l} \leq \tilde{C}_l ||b-b'||^{\alpha}
\end{equation}
 such that
\begin{equation}\label{eq:new:alpha}
\alpha \in \min (\beta, \log_{\mu} q),
\end{equation}
where $\mu$ is the largest magnitude of  eigenvalues of $A$.\textbf{•}
\end{itemize}
\end{thm}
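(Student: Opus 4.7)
The plan is the classical Sternberg approach adapted to skew products: derive a functional equation for $\phi_b := \mathrm{id} + h_b$, construct it as the limit of an iterative scheme, and analyse the regularity of the limit. Since $O = (b_*,0)$ is a hyperbolic fixed point of $F$ and $DF(O)$ is block-diagonal with blocks $A$ and $\lambda_{b_*}$, hyperbolicity forces $|\lambda_{b_*}|\ne 1$. Replacing $F$ by $F^{-1}$ if needed (it is again a skew product, over $A^{-1}$), I may assume $|\lambda_{b_*}|<1$ and fix a neighborhood $V\ni b_*$ in $\mathbb{T}^d$ and $q\in(|\lambda_{b_*}|,1)$ with $|\lambda_b|\le q$ on $V$. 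The conjugacy equation \eqref{eq_equation_conjugacy} unpacks to the functional identity
\[
\phi_{Ab}(\lambda_b x)=f_b\bigl(\phi_b(x)\bigr),
\]
whose $n$-fold iteration reads $\phi_b(x)=(F^{(n)}_b)^{-1}\bigl(\phi_{A^nb}(\Lambda^{(n)}_bx)\bigr)$, with $F^{(n)}_b:=f_{A^{n-1}b}\circ\cdots\circ f_b$ and $\Lambda^{(n)}_b:=\lambda_{A^{n-1}b}\cdots\lambda_b$. Substituting the identity for the unknown $\phi_{A^nb}$ produces the Sternberg candidate $\phi_b^{(n)}(x):=(F^{(n)}_b)^{-1}\bigl(\Lambda^{(n)}_bx\bigr)$, and the goal is to show that $\phi_b:=\lim_{n\to\infty}\phi_b^{(n)}$ exists with the regularity required by items 1--3.

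Convergence should follow from a telescoping estimate: using $F^{(n+1)}_b=f_{A^nb}\circ F^{(n)}_b$ and the Taylor expansion $f_c^{-1}(\lambda_c y)=y+O(y^2)$, the correction $\phi^{(n+1)}_b-\phi^{(n)}_b$ is $(F^{(n)}_b)^{-1}$ applied to an $O((\Lambda^{(n+1)}_bx)^2)$ error; since the Lipschitz constant of $(F^{(n)}_b)^{-1}$ near $0$ is of order $1/|\Lambda^{(n)}_b|$, one obtains $\|\phi^{(n+1)}_b-\phi^{(n)}_b\|_{C^0}\lesssim |\Lambda^{(n)}_b|\lesssim q^n$. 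Higher-derivative bounds will come from Fa\`a di Bruno applied to $F^{(n)}_b$ and $(F^{(n)}_b)^{-1}$: the standard control of the $l$-th derivative of a long composition of $C^k$ contractions gives a geometric tail for $l\le k-2$. The loss of two derivatives --- one for the Lipschitz factor $1/\Lambda^{(n)}_b$ in the pull-back and one for pairing it with the quadratic Taylor remainder --- is intrinsic to this bookkeeping and accounts for the exponent $k-2$ in item 2. Summing the telescopes and checking $\phi_b(0)=0,\ \phi_b'(0)=1$ yields items 1 and 2.

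For the H\"older estimate, fix $b,b'\in V$ with $d:=\|b-b'\|$ small and compare $\phi^{(n)}_b$ with $\phi^{(n)}_{b'}$. The Anosov bound $\|A^jb-A^jb'\|\le\mu^jd$ and hypothesis \eqref{eq:new:Holder_f} give $\|f_{A^jb}-f_{A^jb'}\|_{C^l}\le C_l\mu^{j\beta}d^\beta$; propagating these errors through the $n$-fold composition and pulling back by $(F^{(n)}_b)^{-1}$ yields $\|\phi^{(n)}_b-\phi^{(n)}_{b'}\|_{C^l}\lesssim \mu^{n\beta}d^\beta$ for $l\le k-2$, while the convergence step already bounds the tail by $\|\phi_b-\phi_b^{(n)}\|_{C^l}\lesssim q^n$. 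Choosing $n\approx\log_\mu(1/d)$ --- the base scale at which the two orbits decorrelate --- balances the two exponentials and delivers the H\"older bound with exponent $\alpha=\min(\beta,\log_\mu(1/q))$, which is item 3. The principal obstacle is precisely this balance: the argument must intertwine the base expansion rate $\mu$ with the fiber contraction rate $q$ and maintain accurate Fa\`a di Bruno control of the $C^l$-norms of long compositions of maps that are $C^k$ in the fiber but only H\"older in the base, which is what forces the two-derivative loss and ultimately pins down the exponent $\alpha$.
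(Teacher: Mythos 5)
Your route (the classical Sternberg iteration $\phi^{(n)}_b=(F^{(n)}_b)^{-1}(\Lambda^{(n)}_bx)$ with $F^{(n)}_b=f_{A^{n-1}b}\circ\cdots\circ f_b$, $\Lambda^{(n)}_b=\lambda_{A^{n-1}b}\cdots\lambda_b$) is genuinely different from the paper's, which solves a homological equation by an explicit series and runs a contraction-mapping argument for the composition of the homological and shift operators. But as written your argument has two concrete gaps. First, the reduction at the start is insufficient: you fix a neighborhood $V\ni b_*$ with $|\lambda_b|\le q<1$ only on $V$, yet your scheme needs this bound along the entire forward $A$-orbit $b,Ab,A^2b,\dots$ of every $b$ near $b_*$. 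Since $A$ is a hyperbolic toral automorphism, these orbits leave $V$ immediately and typically wander over the whole torus, where you have no control on $\lambda_{A^jb}$; then $\Lambda^{(n)}_b$ need not tend to $0$, $(F^{(n)}_b)^{-1}$ need not have Lipschitz constant of order $1/|\Lambda^{(n)}_b|$ on a fixed fiber interval, and the telescoping estimate collapses. The paper handles exactly this by the globalization step of Section 2: a cut-off replaces $f_b$ away from $O$ by a contraction so that $\lambda_b<1$ on all of $\mathbb{T}^d$, and the real work (Theorems \ref{thm:new:2}, \ref{thm:new:3}) is a global statement on $\mathbb{T}^d\times[0,\varepsilon]$. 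Your proof needs such a step (or another device) before the infinite composition is even controlled.

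Second, the H\"older step does not deliver the stated exponent. With your choice $n\approx\log_\mu(1/d)$ the propagated term $\mu^{n\beta}d^\beta$ is $O(1)$, so the bound is vacuous; optimizing $q^n+\mu^{n\beta}d^\beta$ over $n$ gives at best the exponent $\beta\gamma/(\beta+\gamma)$ with $\gamma=\log_\mu(1/q)$, which is strictly smaller than the $\min(\beta,\gamma)$ required by \eqref{eq:new:alpha}. The paper reaches the full range (any $\alpha\le\beta$ with $\mu^\alpha q<1$) because in the series solution the $k$-th term's H\"older-in-$b$ constant carries the product $(\mu^\alpha q)^k$: the contraction $q^k$ (through $\Pi_k$ and $P_k$, Propositions 2--4) multiplies the base expansion $\mu^{k\alpha}$ inside the same term, so only the product must be $<1$. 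To salvage your scheme you would have to prove the analogous refined statement, namely that the H\"older constant of each increment $\phi^{(j+1)}_b-\phi^{(j)}_b$ itself carries the small factor $q^j$ (because the stage-$j$ error acts at points of size $\sim\Lambda^{(j)}_b$), and then sum term by term; balancing two global exponential bounds, as you propose, is intrinsically lossy and cannot recover $\min(\beta,\gamma)$. A smaller remark: in the paper the loss of two derivatives in item 2 comes simply from $Q_b=R_b/x^2\in C^{k-2}$, whereas your Fa\`a di Bruno bookkeeping for $C^l$ norms of long compositions is only sketched and would need to be carried out uniformly in $n$.
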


This theorem is local: the conjugacy relation $F \circ H = H \circ F_0$ holds in a neighborhood of $O$ only. We will reduce this theorem to the following two results.

\begin{thm}\label{thm:new:2}
Consider the same $F$ as in Theorem \ref{thm:new:1}, with $k=2$. Let in addition
\begin{equation*}\label{eq:lambda_bound}
\lambda_b \leq q < 1 \; \;\;\;\forall b \in \mathbb{T}^d
\end{equation*}
Then the map $H$ with the properties 1, 2, 3 mentioned in Theorem \ref{thm:new:1} is defined in a set
\begin{equation*}
M_{\varepsilon}=\left\{
(b,x) \in \mathbb{T}^d \times [0,1] \left|\right. x \in [0, \varepsilon]
\right\}
\end{equation*}
for some $\varepsilon>0$ and is continuous on this set. Moreover, for $l=0$, the relation \eqref{eq:new:h4} holds for any $\alpha$ as in \eqref{eq:new:alpha}.
\end{thm}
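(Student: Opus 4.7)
The plan is to realize $H$ as the unique fixed point of the graph-transform operator attached to the conjugacy equation. Restricted to fibers, $F\circ H=H\circ F_0$ reads $f_b(H_b(x))=H_{Ab}(\lambda_b x)$, which I rewrite as a fixed-point equation
\begin{equation*}
(TH)_b(x):=f_b^{-1}\bigl(H_{Ab}(\lambda_b x)\bigr)=H_b(x).
\end{equation*}
Because $\lambda_b\le q<1$ uniformly, the base-fiber map $(b,x)\mapsto(Ab,\lambda_b x)$ sends $M_\varepsilon$ into itself, so $T$ is well defined on jointly continuous fiber-preserving $H:M_\varepsilon\to M$. I look for $H$ in the form $H_b(x)=x+h_b(x)$ with $h_b(0)=h_b'(0)=0$; expanding $f_b^{-1}(y)=y/\lambda_b+\hat g_b(y)$, $\hat g_b(y)=O(y^2)$, the induced operator on $h$ is
\begin{equation*}
(Th)_b(x)=\lambda_b^{-1}\,h_{Ab}(\lambda_b x)+\hat g_b\bigl(\lambda_b x+h_{Ab}(\lambda_b x)\bigr).
\end{equation*}

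For existence and fiberwise $C^2$-smoothness, I would apply Banach's theorem in the weighted space of jointly continuous $h$ with $|h_b(x)|\le K_0 x^2$, equipped with $||h||_*=\sup_{b,x}|h_b(x)|/x^2$. The key bound $|\lambda_b^{-1}h_{Ab}(\lambda_b x)|\le K_0\lambda_b x^2\le qK_0 x^2$ shows that the linear part of $T$ is a $q$-contraction, while the $\hat g_b$-term contributes only $O(\varepsilon)$ thanks to $\hat g_b(y)=O(y^2)$. For $\varepsilon$ small, $T$ is a strict contraction and admits a unique jointly continuous fixed point. The same reasoning applied fiberwise to $\partial_x h$ and $\partial_x^2 h$ (still exploiting $\lambda_b\le q$ and the second-order vanishing of $\hat g_b$) upgrades the fixed point to fiberwise $C^2$-smoothness and forces $h_b(0)=h_b'(0)=0$; the conjugacy $F\circ H=H\circ F_0$ on $M_\varepsilon$ holds by construction.

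The main obstacle is the Hölder continuity in $b$. I would repeat the contraction in the enriched norm
\begin{equation*}
||h||_{*,\alpha}=||h||_*+\sup_{b\ne b',\,x}\frac{|h_b(x)-h_{b'}(x)|}{x^2\,||b-b'||^\alpha}.
\end{equation*}
Decomposing $(Th)_b(x)-(Th)_{b'}(x)$ into contributions from (a) $h$ evaluated at shifted base points, (b) the difference $\lambda_b-\lambda_{b'}$, and (c) the difference $f_b-f_{b'}$, the weight $x^2$ together with the second-order vanishing of both $h$ and $\hat g_b$ allow each piece to be estimated separately. Piece (a) produces the genuine Hölder contraction factor: by $||Ab-Ab'||\le\mu||b-b'||$ and the weighted seminorm, $|h_{Ab}(\lambda_b x)-h_{Ab'}(\lambda_b x)|\le||h||_{*,\alpha}\,\lambda_b^2 x^2\mu^\alpha||b-b'||^\alpha$, and division by $\lambda_b$ leaves the factor $\lambda_b\mu^\alpha\le q\mu^\alpha$. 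Pieces (b) and (c) are bounded by $Cx^2||b-b'||^\beta$ via \eqref{eq:new:Holder_f} and the fiberwise $C^2$-control from the previous step, which is admissible provided $\alpha\le\beta$. Thus $T$ contracts in $||\cdot||_{*,\alpha}$ precisely when $q\mu^\alpha<1$ and $\alpha\le\beta$, matching the range in \eqref{eq:new:alpha}. For every such $\alpha$ the fixed point from the previous step also lies in $||\cdot||_{*,\alpha}$, and setting $x\le\varepsilon$ in $|h_b(x)-h_{b'}(x)|\le Cx^2||b-b'||^\alpha$ delivers \eqref{eq:new:h4} with $l=0$.
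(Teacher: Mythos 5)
Your construction differs from the paper's (the paper splits the conjugacy equation into a homological equation, inverts it by the explicit series $h_b=-\sum_k \Pi_k(b)\,Q\circ F_0^k/\lambda_{A^kb}$, and runs the fixed-point argument for $L\Phi$ only in the $C$-norm on an invariant set carrying $C$, Lipschitz and H\"older bounds), whereas you use the graph transform $(Th)_b(x)=\lambda_b^{-1}h_{Ab}(\lambda_b x)+\hat g_b(\lambda_b x+h_{Ab}(\lambda_b x))$ in the weighted norm $\sup|h_b(x)|/x^2$. That part of your argument is sound: the factor $\lambda_b\le q$ plus the $O(\varepsilon)$ contribution of $\hat g_b$ does give a contraction and a jointly continuous fixed point with $h_b(x)=O(x^2)$, and your exponent condition $q\mu^\alpha<1$, $\alpha\le\beta$ is exactly the paper's $\theta=\mu^\alpha q<1$ together with \eqref{eq:new:alpha}.

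The gap is in the H\"older step, where you assert that $T$ \emph{contracts} in the enriched norm $\|\cdot\|_{*,\alpha}$. Write $w=h-g$ and compare $(Tw)_b$ with $(Tw)_{b'}$: besides your piece (a), the linear term produces $\lambda_b^{-1}\bigl(w_{Ab'}(\lambda_b x)-w_{Ab'}(\lambda_{b'}x)\bigr)$ (and the $\hat g_b$-term produces an analogous inner-argument shift), because the evaluation point $\lambda_b x$ itself moves with $b$. Estimating this by $C x^2\|b-b'\|^{\beta}$ requires a Lipschitz-in-$x$ (or $C^1$) bound on the \emph{difference} $w$, and neither $\|w\|_*$ nor the weighted H\"older seminorm controls any modulus of continuity of $w$ in $x$; the ``fiberwise $C^2$-control from the previous step'' is a bound on individual elements (or on the fixed point), and cannot be invoked for $h-g$ inside a contraction estimate. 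So the inequality $\|Th-Tg\|_{*,\alpha}\le\kappa\|h-g\|_{*,\alpha}$ is not established, and pieces (b), (c) as you state them only prove boundedness of $\|Th\|_{*,\alpha}$ on a set with a priori $C^1$ bounds, not contraction. Two standard repairs: either enlarge the norm by a weighted Lipschitz-in-$x$ component and verify the resulting (triangular) system of estimates contracts after weighting the components; or, as the paper does, prove contraction only in the weak ($C$, or your $\|\cdot\|_*$) norm and show separately that a closed set defined by simultaneous $C$, $\Lip_x$ and H\"older-in-$b$ bounds is mapped into itself, so that the unique fixed point automatically satisfies \eqref{eq:new:h4} with $l=0$. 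With that restructuring your argument goes through; as written, the key H\"older claim does not follow from the stated estimates.
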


\begin{thm}\label{thm:new:3}
Suppose that all assumptions of Theorem \ref{thm:new:2} hold, except for $k$ is arbitrary now. Then the map $H$ with the same properties as in Theorem \ref{thm:new:2} exists. Moreover, $H$ is $C^{k-2} $ fiberwise smooth, and satisfies the H\"older condition \eqref{eq:new:alpha}
for $l = k - 2$.
\end{thm}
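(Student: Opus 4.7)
The plan is to take as input the continuous conjugacy $H(b,x)=(b, g_b(x))$, $g_b(x)=x+h_b(x)$, supplied by Theorem~\ref{thm:new:2}, and bootstrap its fiberwise regularity from a series obtained by differentiating the conjugacy equation. Set $u_b(x) := \log g_b'(x)$. Differentiating $f_b(g_b(x)) = g_{Ab}(\lambda_b x)$ once in $x$ and taking logarithms gives
\[
u_b(x) = u_{Ab}(\lambda_b x) + P(b, x), \qquad P(b, x) := \log\frac{\lambda_b}{f_b'(g_b(x))};
\]
note that $P(b, 0) = 0$, since $\lambda_b = f_b'(0)$ and $g_b(0) = 0$.

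Iterating this identity $N$ times and writing $\Lambda_b^N := \prod_{i=0}^{N-1} \lambda_{A^i b} \le q^N$ one obtains
\[
u_b(x) = \sum_{n=0}^{N-1} P(A^n b, \Lambda_b^n x) + u_{A^N b}(\Lambda_b^N x).
\]
Since $\Lambda_b^N \to 0$ and $u_\cdot(0) = 0$ by continuity, the remainder vanishes, producing the explicit series
\[
u_b(x) = \sum_{n=0}^{\infty} P(A^n b, \Lambda_b^n x).
\]
Conversely, I would take this as a definition of $u_b$ and reconstruct $g_b(x) = \int_0^x \exp(u_b(t))\,dt$; this would coincide with the Theorem~\ref{thm:new:2} conjugacy by uniqueness.

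The main work is to show that this series converges fiberwise in $C^l$ and Hölder-in-$b$ in the $C^l(x)$ seminorm, for each $0 \le l \le k-2$. Two bounds drive every estimate: (a) since $P(b, 0) = 0$, each chain-rule differentiation of $P(A^n b, \Lambda_b^n x)$ in $x$ produces a factor $\Lambda_b^n$, hence $\|P(A^n b, \Lambda_b^n\,\cdot)\|_{C^l(x)} \le C_l q^n$, provided $g_b \in C^l(x)$ and $f_b \in C^{l+1}$ (which forces $k \ge l+2$); (b) the Hölder seminorm in $b$ of this quantity acquires a factor $\mu^{n\alpha}$ because the map $b \mapsto A^n b$ is $\mu^n$-Lipschitz, and the further $b$-dependence through $\lambda_b$, $g_b$, and $f_b'$ is Hölder with exponent $\beta$ by \eqref{eq:new:Holder_f}. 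Both series then converge geometrically as soon as $q\mu^\alpha < 1$, which is exactly the content of \eqref{eq:new:alpha}. The whole argument proceeds by strong induction on $l$: at each step $l$, the inductive hypothesis furnishes the smoothness of $g_b$ needed to make $P$ genuinely $C^l$ in $x$; summing produces $u_b \in C^l(x)$ with the required Hölder-in-$b$ dependence; and integrating recovers $g_b \in C^{l+1}(x)$, closing the loop.

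The chief obstacle is the tension between the contraction rate $q$ in the fiber and the expansion rate $\mu$ in the base. Each fiber derivative of a summand saves a factor $q^n$, but the Hölder-in-$b$ seminorm costs a factor $\mu^{n\alpha}$. Condition \eqref{eq:new:alpha} is precisely what balances them, and without it the geometric convergence of the series would fail at its first step. A secondary nuisance is the circularity that $P$ itself contains $g_b$, which is only known up to the previous inductive regularity; threading the bootstrap carefully (smoothness in $x$ and Hölder in $b$ advancing in lockstep) is what forces the stated two-derivative gap between the hypothesis on $f$ and the conclusion on $H$.
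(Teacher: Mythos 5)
Your route is genuinely different from the paper's: the paper simply reruns the fixed-point scheme of Theorem \ref{thm:new:2} in a smaller space $\mathcal{N}_l$ of fiberwise $C^l$ functions, with the weighted norm $\varkappa_0\|h\|_C+\dots+\varkappa_l\|h^{(l)}\|_C$, showing that the same operators $L$ and $\Phi$ are bounded there, preserve a suitable ball (choice of $A_0,\dots,A_l$) and contract (choice of $\varkappa_0,\dots,\varkappa_l$); you instead try to bootstrap regularity of the already-constructed conjugacy from an explicit cocycle series for $u_b=\log g_b'$. The problem is that your argument never establishes that $g_b$ is differentiable at all, and the step you offer to close this gap (``take the series as the definition of $u_b$, set $\tilde g_b(x)=\int_0^x e^{u_b(t)}\,dt$, and identify $\tilde g$ with the Theorem \ref{thm:new:2} conjugacy by uniqueness'') is circular. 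Indeed $P(b,x)=\log\bigl(\lambda_b/f_b'(g_b(x))\bigr)$ is built from the unknown $g_b$, so the series does define a continuous $u_b$; but to invoke uniqueness you must first check that $\tilde g$ conjugates $F$ to $F_0$, and differentiating the desired relation shows this verification needs $f_b'(\tilde g_b(x))=f_b'(g_b(x))$, i.e.\ essentially $\tilde g=g$ --- exactly what is to be proved. Moreover no general uniqueness statement for conjugacies of the form $x+x^2h_b(x)$ is available off the shelf: the contraction argument of Theorem \ref{thm:new:2} gives uniqueness only inside the ball $\mathcal{N}$, so even the identification step would require its own proof.

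To make your plan rigorous you would need an independent argument that the Theorem \ref{thm:new:2} conjugacy is fiberwise $C^1$ with $\log g_b'=u_b$, for instance by showing that the finite-time approximations $G_N=(f_{A^{N-1}b}\circ\dots\circ f_b)^{-1}\circ\Pi_N(b)\,x$ converge to $g_b$ together with their fiber derivatives; $\log G_N'$ is precisely a partial sum of your series with $g_{A^nb}(\Pi_n(b)x)$ replaced by its finite-time analogue, and the uniform $C^1$ convergence (distortion control of the long compositions) is the real content, absent from the proposal. Only then does your induction on $l$ have a base. Granting that, the quantitative part of your sketch is sound: each fiber derivative of a summand gains a factor $\Pi_n(b)\le q^n$, each H\"older increment in $b$ costs $\mu^{n\alpha}$ (plus H\"older constants of $\Pi_n$, as in Proposition \ref{proposition_Pi_n_is_Holder}), and the series converge geometrically exactly under $q\mu^{\alpha}<1$, i.e.\ under \eqref{eq:new:alpha}; this would in fact also yield the H\"older continuity of the fiber derivatives, which the paper explicitly leaves unproved. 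But as written the proposal does not prove Theorem \ref{thm:new:3}.
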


Theorems \ref{thm:new:2} and \ref{thm:new:3} are the main results of the paper. They are similar: the first one  claims that the fiberwise conjugacy $H$ is continuous in the $C$-norm with respect to  the point of the fiber, and the second one improves this result -- by decreasing the neighborhood in the fiber, and replacing the $C$-norm by the $C^l$ one. The main part of the paper to follow is the proof of Theorem \ref{thm:new:2}. At the end we present a part of the proof of Theorem \ref{thm:new:3}. Namely, we prove that the maps $h_b$ are $(k-2)$-smooth , but we do not prove that the derivatives $
\frac{\partial^j h_b}{\partial x^j},  1 \leq j \leq k-2
$ are H\"older in $b$. This may be proven in the same way as the H\"older property for $h_b$, but requires more technical details that we skip here.

At the beginning of the next section we
deduce  Theorem \ref{thm:new:1}  from Theorems \ref{thm:new:2}, \ref{thm:new:3}.

\subsection{Comparison with the theory of ``nonstationary normal forms''}

This theory is  developed in \cite{GK} and \cite{Gu}. The closer to our results is Theorem 1.2 of \cite{GK} proved in full detail as Theorem 1 of \cite{Gu}. For future references we will call it the GK-Theorem. This theorem considers a wide class of maps to which the map \eqref{eq:new:def_skew} belongs, provided that it satisfies the so called \emph{narrow band spectrum condition.} In assumptions of our Theorem \ref{thm:new:1}, this condition has the form:

\begin{equation}\label{eqn:narrow}
  (\max_{b \in \mathbb{T}^d} \lambda_b)^2 < \min_{b \in  \mathbb{T}^d}  \lambda_b.
\end{equation}

This is a restrictive condition, not required in Theorem \ref{thm:new:2}. Moreover, GK-Theorem claims properties 1 and 2 of the map $H$, that is, $H$ is a fiberwise smooth  topological conjugacy, and does not claim H\"{o}lder continuity of the fiberwise maps of $H$, see \eqref{eq:new:alpha}. To summarize, Theorems \ref{thm:new:2}, \ref{thm:new:3} improve the GK-Theorem for the particular class of maps \eqref{eq:new:def_skew}, skipping the narrow band spectrum condition and adding Property 3, the H\"{o}lder continuity.

Statements 1 and 2 of Theorem \ref{thm:new:1} may be deduced from the GK-Theorem. Indeed, any continuous function $\lambda < 1 $ satisfies condition \eqref{eqn:narrow} in a suitable neighborhood of any point. On the other hand, Theorem \ref{thm:new:1} in its full extent is easily deduced from Theorems \ref{thm:new:2}, \ref{thm:new:3}.

\section{The plan of the proof}\label{section_plan}
\subsection{Globalization}
 Theorem \ref{thm:new:1} is proved with
  the help of standard globalization technics (see, for instance, \cite{Nonlocal}).
Without loss of generality, hyperbolicity of the skew-product $F$ implies that $\lambda_b<1$ in the neighborhood $U \in \mathbb{T}^d$ of the fixed point O. If not, we pass to the inverse mapping $F^{-1}$. Let $K$ be a compact subset of $U$. Let us take a smooth cut function $\varphi: \mathbb{T}^d \rightarrow [0,1]$ such that

\begin{equation}
\varphi |_K \equiv 0, \varphi | _{\mathbb{T}^d \setminus U} \equiv 1
\end{equation}

Instead of the initial function $f_b(x)$ on the fibers let us consider the function
\begin{equation}
\tilde{f}_b(x)=f_b(x)(1-\varphi)+\frac{x}{2}\varphi
\end{equation}

Then a map

\begin{equation}
\tilde{F}: X_{\varepsilon} \rightarrow X, (b,x) \mapsto (Ab, \tilde{f}_b(x))
\end{equation}
has the following list of properties:

\begin{itemize}
\item[1.] $\tilde{F}$ coincides with $F$ in the neighborhood of the fixed point $O$
\item[2.] $\tilde{F}$ is attracting near the zero layer: if $\tilde{\lambda}_b:=\tilde{f}_b'(0)$ then
\begin{equation}\label{eq_additional}
\tilde{\lambda}_b<1 \;\; \forall b \in \mathbb{T}^d
\end{equation}
\end{itemize}

So without loss of generality we may assume from the very beginning that $\lambda_b \in (0,1)$ everywhere on the base $\mathbb{T}^d$. Moreover, the conjugacy $H$ is to be found in the whole $M_{\varepsilon}$: in other words, the equality $F \circ H = H \circ F_0$ will hold on the full neigbourhood $M_{\varepsilon}$ of the base.

All the rest of the article deals with the proof of the global result, i.e. Theorem \ref{thm:new:2} and Theorem \ref{thm:new:3}.

Now let us prove Theorem \ref{thm:new:2}: the main idea is to use fixed point theorem to prove the existence of the conjugacy $H$: we should just properly define the functional space and a contraction operator in it. In the following sections we will do all of it, postponing some calculcations as well as the proof of Theorem \ref{thm:new:3} to the Appendix (Section \ref{section_appendix}).

\subsection{Homological and functional equations}
Suppose that \begin{equation}\label{eq_f_expansion_quadratic}
f_b(x)=\lambda_b x+R_b(x), \;\; R_b(x)=O(x^2), x \rightarrow 0.
\end{equation}
Then finding the conjugacy $H: M_{\varepsilon} \rightarrow M_{\varepsilon}$ of the form \eqref{eq_conjugacy_form} satisfying \eqref{eq_equation_conjugacy}  is equivalent to finding the solution $\bar{h}_b(x)$ of a so-called \emph{functional equation}
\begin{equation}\label{eq_functional_equation_long}
\bar{h}_{Ab}\left(\lambda_b x\right)-\lambda_b \bar{h}_b(x)=R_b\left(x+\bar{h}_b(x)\right)
\end{equation}
More briefly, this equation can be written in a\emph{ compositional form} as
\begin{equation}\label{eq_functional_equation_short}
\bar{h}\circ F_0-\lambda \bar{h} = R \circ H
\end{equation}
Here we denote by $\lambda$ the operator of multiplication by  $\lambda_b$, here $b$ is an argument of the considered function.

In the following we will be working not with the quadratic part of the conjugacy itself but with this part divided by $x^2$. That's why we change the notations in such a way: we write bars for the functions in the space of quadratic parts for possible conjugacy maps and we don't write bars for the same functions divided by $x^2$, for instance, $h_b(x):=\frac{\bar{h}_b(x)}{x^2}$.  In a similar fashion, $Q_b(x):=\frac{R_b(x)}{x^2}$.

The functional equation is a hard one to solve since the function $\bar{h}_b(x)$ is present in both sides of the equation. One may simplify functional equation and consider a gentler form of the equation on $\bar{h}_b(x)$, a \emph{homological equation:}
\begin{equation}\label{eq_homological_equation_short}
\bar{h} \circ F_0 - \lambda \bar{h} = R
\end{equation}

The solution of the homological equation doesn't give the conjugacy but is a useful tool in the investigation. Homological eqation can be rewritten equivalently in terms of $h$ and $Q$ as
\begin{equation}\label{eq_homological_equation_Q}
\lambda^2 h \circ F_0 - \lambda h = Q
\end{equation}

\subsection{Operator approach}\label{subsection_operator_approach}
Let us consider a space $\mathcal{M}$ of real-valued functions defined on $M$ which are continuous on $b \in \mathbb{T}^d$ and smooth in $x \in [0,1]$:

\begin{equation}
\mathcal{M}:=\left\{
\bar{h}_b(x) \in M \left|\right.
\bar{h}_{\cdot}(x) \in C(\mathbb{T}^d), \bar{h}_b (\cdot) \in C^k [0,1]
\right\}
\end{equation}
 Let us define an operator $\bar{\Psi}: \mathcal{M} \rightarrow \mathcal{M}$ on it which acts on a function $h(b,x)$ by associating to it the left-hand side of the homological equation \eqref{eq_homological_equation_short}.
With the use of this operator, equation \eqref{eq_homological_equation_short} can be rewritten in a form $\bar{\Psi} \bar{h} = R$.

Denote $\bar{L}$ an inverse operator to $\bar{\Psi}$. The operator $\bar{L}$ is \emph{solving} homological equation: if the right-hand side is $R$ then $\bar{L}R=\bar{h}$ and $\bar{L} \bar{\Psi} = \id$. From now on, operator $\bar{L}$ will be referred as \emph{homological operator}.

Let us define a \emph{shift operator} $\bar{\Phi}: \mathcal{M} \rightarrow \mathcal{M}$ which acts as

\begin{equation}\label{eq_shift_operator_def}
\bar{\Phi} \bar{h} (b,x) = R_b\left(x + \bar{h}_b(x)\right)
\end{equation}

Then the functional equation on the function $\bar{h}$ can be rewritten in the form $\bar{\Psi} \bar{h} = \bar{\Phi} \bar{h}$ or, equivalently, $\bar{h} = \bar{L} \bar{\Phi} \bar{h}$. So the search for conjugacy is \textbf{equivalent} to the search of a fixed point for the operator $\bar{L} \bar{\Phi}$ in the space $\mathcal{M}$.

Let us note for the future that the operator $\bar{\Psi}$ (as well as its inverse $\bar{L}$) is a linear operator on the space of formal series although operator $\bar{\Phi}$ is not at all linear: for instance, it sends a zero function to $R_b(x)$.

\subsection{Choice of a functional space for the Banach fixed point theorem}
We will be using the simplest of the forms of a contraction mapping principle by considering a contracting mapping defined on a metric space $\mathcal{N}$ and preserving its closed subspace $N$.

Let us define $\mathcal{N}, d$ and $N$ for our problem: a contraction mapping $f$ will be a slight modification of the composition of operators $\bar{L} \bar{\Phi}$ considered in Section \ref{subsection_operator_approach}. Note that operator $\bar{L}$ as well as operator $\bar{\Phi}$ preserve the subspace of $\mathcal{M}$ of functions starting with quadratic terms in $x$ which we will denote by $\mathcal{M}^2$:

\begin{equation}\label{def_space_M_two}
\mathcal{M}^2=\left\{
\bar{h}_b(x) \in \mathcal{M} | \bar{h}_b(x)=x^2 h_b(x), h_b(x) \in \mathcal{M}
\right\}
\end{equation}

That's why for our comfort we will define the operators $L$ and $\Phi$ acting on $\mathcal{M}$ as
\begin{align}\label{eq_def_L_and_Phi_normalized}
L h:= \frac{\bar{L} [x^2 h]}{x^2}  \;\;\;\;\; \;\;\;\;\;\;\;
\Phi h:= \frac{\bar{\Phi} [x^2 h]}{x^2}
\end{align}

These operators correspond to the solution of homological equation and to the shift operator but are somewhat normalized.

The linearization theorems we prove will be applicable only in the vicinity of the base, i.e. in $\mathbb{T}^d \times [0, \varepsilon]$. The conditions on the small constant $\varepsilon$ will be formulated later.
 Contraction mapping theorem  will be applied to the operator $L \Phi$ acting in the complete metric space $\mathcal{M}_{\varepsilon}$ of functions from $\mathcal{M}$ restricted to the small neighborhood of a torus $\mathbb{T}^d \times [0,\varepsilon]$. A norm on this space is simply a continuous one, for $h(b,x) \in \mathcal{M}_{\varepsilon}$ it is defined by
\begin{equation}
||h||_{C, \varepsilon}=\sup_{(b,x) \in \mathbb{T}^d \times [0,\varepsilon]} \left|h_b(x)\right|.
\end{equation}
To use contraction mapping principle we define a space
\begin{equation}\label{eq_definition_space_and_metric}
\mathcal{N}:=\left\{h \in \mathcal{M}_{\varepsilon}, ||h||_{C, \varepsilon} \leq A\right\}, \rho(h_1, h_2):=||h_1-h_2||_{C,\varepsilon}.
\end{equation}
with a continuous norm on it.

The constant $A$ will be chosen later. Now we pass to the definition of the set $N$.

\subsection{H\"{o}lder property and a closed subspace $N$}
To prove Theorem \ref{thm:new:2} we shall work with the three norms: continuous one $|| \cdot ||_{C, \varepsilon}$ was already defined, now we will define the Lipschitz norm $\Lip_{x, \varepsilon}$ as well as the H\"{o}lder one $||\cdot||_{[\alpha], \varepsilon}$. The index $\varepsilon$ indicates that these norms are considered for the subspaces of functions in $\mathcal{M}^2_{\varepsilon}$; but it will be omitted in the case it is matter-of-course.

\begin{defn}
For a function $h \in \mathcal{M}$ define its \emph{H\"older norm }$||h||_{[\alpha]}$ as
\begin{equation}
||h||_{[\alpha]}:=\sup_{b_1,b_2 \in \mathbb{T}^d, x \in [0,1]} \frac{\left|h_{b_1}(x)-h_{b_2}(x)\right|}{||b_1-b_2||^{\alpha}}
\end{equation}

H\"older norm of a function is sometimes called its H\"older constant.

The subspace of functions $h  \in \mathcal{M}$ for which this norm is finite, will be called the space of \emph{H\"{o}lder functions with exponent} $\alpha$ and denoted by $\mathcal{H}^{\alpha}$. In much the same way, the space $\mathcal{H}^{\alpha}_{\varepsilon}$ is a subset of functions $h$ in $\mathcal{M}_{\varepsilon}$ such that $||h||_{[\alpha], \varepsilon}<\infty$ where

\begin{equation}
||h||_{[\alpha],\varepsilon}:=\sup_{b_1,b_2 \in \mathbb{T}^d, x \in [0,\varepsilon]} \frac{\left|h_{b_1}(x)-h_{b_2}(x)\right|}{||b_1-b_2||^{\alpha}}
\end{equation}
\end{defn}

\begin{defn}
For a function $h \in \mathcal{M}$ define its fiberwise \emph{Lipschitz norm} $\Lip_x h$ as

\begin{equation}
\Lip_x h := \sup_{b \in \mathbb{T}^d, x,y \in [0,1]} \frac{\left| h_b(x)-h_b(y)\right|}{|x-y|}
\end{equation}
Analogously, for $h \in \mathcal{M}_{\varepsilon}$
\begin{equation}
\Lip_{x, \varepsilon} h:=\sup_{b \in \mathbb{T}^d, x,y \in [0,\varepsilon]} \frac{\left| h_b(x)-h_b(y)\right|}{|x-y|}
\end{equation}
\end{defn}

Once these definitions given, we can say what will be the closed subspace $N$ of the functional space $\mathcal{N}$ (see \eqref{eq_definition_space_and_metric}) for the contraction mapping principle. We will show that there exist constants $\varepsilon>0$ as well as $A_{C}, A_{\Lip}$ and $A_{\alpha}$ such that the space

\begin{equation}\label{eq_definition_closed_subspace_N}
N=\left\{
h \in \mathcal{M}_{\varepsilon}, h \in \Hae \; |  \;||h||_{C,\varepsilon} \leq A_{C}, \Lip_{x,\varepsilon} h \leq A_{\Lip}, ||h||_{[\alpha], \varepsilon} \leq A_{\alpha}
\right\}
\end{equation}
is closed in $(\mathcal{N}, \rho)$ and preserved under $L \Phi$. Note that here we need $A \geq A_C$. In the proof, we will first choose constants $A_{C}, A_{\Lip}$ and $A_{\alpha}$, $A$ can be chosen later as $A:=A_C$.

\subsection{Three main lemmas and the proof of Theorem \ref{thm:new:2}}
To prove Theorem \ref{thm:new:2}, one needs simply to show that all the conditions of contraction mapping principle hold for $\mathcal{N}$, $\rho$ and $N$ defined correspondingly in \eqref{eq_definition_space_and_metric} and \eqref{eq_definition_closed_subspace_N}. Here we state three main lemmas that will give the result of Theorem \ref{thm:new:2}.

 Lemma \ref{lem_homological} deals with homological equation and provides an explicit solution of \eqref{eq_homological_equation_short} as a formal series. It also states that this series converges exponentially and gives a continuous function on $M$. Moreover, for $\alpha$ chosen accordingly to \eqref{eq:new:alpha}, the operator $L$ in the space $\mathcal{M}$ preserves the subspace $\mathcal{H}^{\alpha}$ of H\"older functions with this particular exponent. This is a crucial point that gives us the main claim -- H\"older property of a conjugacy.

The two lemmas that are left enable us to apply contraction mapping principle. Lemma \ref{lem_shift} deals with composition $L \Phi$: it states that one can choose a closed subspace $N \subset \mathcal{N}$ of the form \eqref{eq_definition_closed_subspace_N} such that is mapped into itself under the composition $L \Phi$.
Lemma \ref{lem_contraction} proves that $L \Phi$ is indeed a contraction on the space $\Me$ in continuous norm.

Let us state precisely these lemmas.

\begin{lem}\label{lem_homological}[Solution of a homological equation]
Consider a skew product \eqref{eq:new:def_skew}. Let us define a sequence of functions on $\mathbb{T}^d$ as
\begin{equation}\label{eq_definition_pi}
\Pi_0(b):=1, \Pi_n(b):=\lambda_b \lambda_{Ab}\ldots \lambda_{A^{n-1}b}, n=1, 2, \ldots
\end{equation}
Let $\alpha$ be given by \eqref{eq:new:alpha}, and set
\begin{equation}\label{eq_theta}
\theta= \theta(\alpha):=\mu^{\alpha} q <1
\end{equation}
Suppose that conditions \eqref{eq:lambda_bound} and \eqref{eq_theta} hold, and let $Q \in \mathcal{H}^{\alpha}$.

Then the following holds:

$1.$ There exist a solution $h_b(x)$ of the homological equation \eqref{eq_homological_equation_Q}; it can be represented as a formal series
\begin{equation}\label{eq_solution_of_homological_equation_formal_series}
h_b(x)=-\sum_{k=0}^{\infty}\frac{\Pi_k(b) Q \circ F_0^k(b,x)}{\lambda_{A^{k} b}}
\end{equation}

$2.$ The series \eqref{eq_solution_of_homological_equation_formal_series} converges uniformly on $M$ and its sum is continuous in $b$ and as smooth in $x$ as $Q$.

$3.$ The solution $h$ satisfies H\"older condition with the exponent equal to $\alpha$ : $h \in \mathcal{H}^{\alpha}$.

$4.$ The operator $L: Q \mapsto h$ is bounded in $C$--norm on the space $\M$.
\end{lem}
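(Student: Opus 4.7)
My plan is to attack the lemma in four stages, one for each numbered claim.

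First I will derive the formal series \eqref{eq_solution_of_homological_equation_formal_series} by iterating the homological equation. Dividing \eqref{eq_homological_equation_Q} by $\lambda_b$ rewrites it as $h_b(x) = \lambda_b\,h_{Ab}(\lambda_b x) - \lambda_b^{-1}Q_b(x)$. Substituting this identity back into $h_{Ab}(\lambda_b x)$ and iterating $n$ times gives
\[
h_b(x) = \Pi_n(b)\,h_{A^n b}(\Pi_n(b)x)\;-\;\sum_{k=0}^{n-1}\frac{\Pi_k(b)\,Q_{A^k b}(\Pi_k(b)x)}{\lambda_{A^k b}}.
\]
Since $\Pi_n(b)\leq q^n$ and we seek a bounded $h$, the head term vanishes as $n\to\infty$, yielding \eqref{eq_solution_of_homological_equation_formal_series}; a direct substitution then confirms the formal series solves \eqref{eq_homological_equation_Q}.

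For claim 2 (convergence, continuity, fiberwise smoothness) and claim 4 ($C$-boundedness of $L$), I will use the uniform lower bound $\lambda_{\min}:=\min_b\lambda_b>0$ (positive by continuity on the compact base and because $f_b$ is a diffeomorphism). Each term of \eqref{eq_solution_of_homological_equation_formal_series} is then bounded by $(q^k/\lambda_{\min})\|Q\|_C$, so the series converges uniformly; its sum is continuous in $b$, and the $C$-bound $\|Lh\|_{C,\varepsilon}\leq\|Q\|_C/(\lambda_{\min}(1-q))$ is automatic. The $j$-th $x$-derivative of the $k$-th term carries an extra factor $\Pi_k(b)^j$, so termwise differentiation gives an even faster converging series, transferring whatever fiberwise smoothness $Q$ has to $h$.

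The crux is claim 3, H\"{o}lder regularity. Writing $T_k(b,x):=\Pi_k(b)\lambda_{A^k b}^{-1}Q_{A^k b}(\Pi_k(b)x)$, I will bound $|T_k(b_1,x)-T_k(b_2,x)|\leq C\theta^k\|b_1-b_2\|^\alpha$ term by term, at which point geometric summation using $\theta<1$ delivers $h\in\mathcal H^\alpha$. The triangle inequality splits the difference across three $b$-dependent ingredients. The easy contribution is the H\"{o}lder variation of $Q$ in its base argument,
\[
\bigl|Q_{A^k b_1}(y)-Q_{A^k b_2}(y)\bigr|\leq\|Q\|_{[\alpha]}\|A^k b_1-A^k b_2\|^\alpha\leq\|Q\|_{[\alpha]}\,\mu^{k\alpha}\|b_1-b_2\|^\alpha,
\]
which combines with the $\Pi_k\leq q^k$ prefactor to give exactly $\theta^k$. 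The delicate contribution is the variation of the scalar factors $\Pi_k(b)$ and $\lambda_{A^k b}^{-1}$. From H\"{o}lder continuity of $b\mapsto\lambda_b$ (which follows from \eqref{eq:new:Holder_f} applied to the first derivative at $0$) and the telescoping product estimate,
\[
|\Pi_k(b_1)-\Pi_k(b_2)|\leq C\,q^{k-1}\|b_1-b_2\|^\beta\sum_{j=0}^{k-1}\mu^{j\beta}\leq C'(q\mu^\beta)^k\|b_1-b_2\|^\beta,
\]
which may blow up in $k$ when $q\mu^\beta\geq 1$. This is cured by interpolating with the trivial uniform bound $|\Pi_k(b_1)-\Pi_k(b_2)|\leq 2q^k$: applying $\min(a,b)\leq a^{1-t}b^t$ with weight $t=\alpha/\beta\in[0,1]$ collapses both bounds into $(q\mu^\alpha)^k\|b_1-b_2\|^\alpha=\theta^k\|b_1-b_2\|^\alpha$. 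The same interpolation handles $\lambda_{A^k b}^{-1}$, and the variation of the inner argument $\Pi_k(b)x$ of $Q$ is absorbed using the uniform Lipschitz bound on $Q$ in $x$ together with the same trick.

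The principal obstacle is precisely this interpolation: naive propagation of the $b$-dependence through the product $\Pi_k$ gives a bound that decays only when $q\mu^\beta<1$, a much stronger condition than $\theta<1$. Trading H\"{o}lder exponent for exponential decay is what allows the proof to work in the full range of $\alpha$ permitted by \eqref{eq_theta}, and it is the reason the conjugacy cannot in general attain a H\"{o}lder exponent exceeding the dynamical threshold set by $\mu$ and $q$, no matter how smooth $f$ is in the base.
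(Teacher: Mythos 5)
Your proposal is correct and follows the same skeleton as the paper: derive the series \eqref{eq_solution_of_homological_equation_formal_series} by iterating $h=\lambda\,h\circ F_0-\lambda^{-1}Q$ and killing the head term $\Pi_n(b)h_{A^nb}(\Pi_n(b)x)$, get uniform convergence, $C$-boundedness of $L$ and fiberwise smoothness from $\Pi_k\le q^k$ and the lower bound on $\lambda$, and prove the H\"older claim termwise by splitting each term into the variation of the scalar coefficient $P_k=\Pi_k/\lambda_{A^k\cdot}$ and the variation of $Q\circ F_0^k$, then summing the geometric series in $\theta$.

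The one place you genuinely diverge is the H\"older estimate for $\Pi_k$ (and $\lambda_{A^k b}^{-1}$). The paper observes that, since $\alpha\le\beta$ and the base is compact, $\lambda$ is already $\alpha$-H\"older, and telescopes the product directly in exponent $\alpha$: $|\Pi_k(b_1)-\Pi_k(b_2)|\le q^{k-1}C_\lambda\sum_{j=0}^{k-1}\mu^{j\alpha}\|b_1-b_2\|^\alpha\le \mathrm{const}\cdot\theta^k\|b_1-b_2\|^\alpha$, so no interpolation is ever needed. You instead telescope in exponent $\beta$, observe the resulting $(q\mu^\beta)^k$ may grow, and repair this by interpolating with the trivial bound $2q^k$; this is valid and yields the same $\theta^k$ decay, but it is a detour, and as written the weights are slightly off: with the convention $\min(a,b)\le a^{1-t}b^t$ the exponent $t=\alpha/\beta$ must sit on the $\beta$-H\"older bound (otherwise you land on exponent $\beta-\alpha$ rather than $\alpha$), so state explicitly which factor carries which weight. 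Everything else (the $\mu^{k\alpha}$ factor from the base variation of $Q$, the $\Lip_x Q$ term absorbing the variation of the inner argument $\Pi_k(b)x$, and the final geometric summation using $\theta<1$) matches the paper's Propositions on $\Pi_n$, $P_n$ and $\theta_{2,k}$.
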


\begin{lem}\label{lem_shift}[A closed subspace maps inside itself]
For a skew product of the form \eqref{eq:new:def_skew} there exist constants $\varepsilon, A_C, A_{\Lip}, A_{\alpha}>0$ such that the operator $L \Phi$ acting in the space $\M$ maps the closed space $N$ defined by \eqref{eq_definition_closed_subspace_N} into itself.
\end{lem}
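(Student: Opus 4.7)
The plan is to bound the three norms of $L\Phi h$ for an arbitrary $h\in N$ and to verify that each falls below the corresponding $A_C,A_{\Lip},A_\alpha$ once the constants and $\varepsilon$ are picked in the right order. I work in two stages: first I bound $||\Phi h||_{C,\varepsilon}$, $\Lip_{x,\varepsilon}\Phi h$ and $||\Phi h||_{[\alpha],\varepsilon}$ in terms of the data of the skew product and the constants defining $N$; then I propagate these through the series
\begin{equation*}
Lg(b,x)=-\sum_{k\ge 0}\frac{\Pi_k(b)}{\lambda_{A^kb}}\,g_{A^kb}(\Pi_k(b)x)
\end{equation*}
supplied by Lemma~\ref{lem_homological}.

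For the first stage, writing $R_b(y)=y^2Q_b(y)$ yields the clean form $\Phi h(b,x)=(1+xh_b(x))^2\,Q_b(x+x^2h_b(x))$. For $h\in N$ and $\varepsilon$ small, the factor $1+xh_b(x)$ is close to $1$ and the argument $y_b(x)=x+x^2h_b(x)$ is close to $0$, so $||\Phi h||_{C,\varepsilon}\le(1+\varepsilon A_C)^2||Q||_C$. Direct differentiation in $x$ gives a bound on $\Lip_{x,\varepsilon}\Phi h$ in terms of $||R''||_C$, $A_C$, $A_{\Lip}$ and $\varepsilon$; the apparent $1/x^2$ singularity cancels because $R_b$ and $R'_b$ vanish at $0$. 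For the $\alpha$-H\"older norm in $b$ I split
\begin{equation*}
\Phi h(b,x)-\Phi h(b',x)=\bigl[(1+xh_b)^2-(1+xh_{b'})^2\bigr]\,Q_b(y_b)+(1+xh_{b'})^2\bigl[Q_b(y_b)-Q_{b'}(y_{b'})\bigr],
\end{equation*}
bounding the first bracket via the $A_\alpha$-estimate on $h_b-h_{b'}$ and the second via the H\"older hypothesis \eqref{eq:new:Holder_f} on $Q_b$ combined with the fiberwise Lipschitz estimate of $Q_b$ (used to absorb $y_b-y_{b'}$).

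For the second stage, the uniform lower bound $\lambda_b\ge q_0>0$, coming from $f_b$ being a $C^1$ diffeomorphism of $I$ and compactness of $\mathbb{T}^d$, together with $\Pi_k(b)\le q^k$, makes $\sum_k\Pi_k^j/\lambda_{A^kb}$ convergent for every $j\ge 1$. This immediately gives $||Lg||_{C,\varepsilon}\le C_1||g||_{C,\varepsilon}$ (which is Lemma~\ref{lem_homological}.4), and termwise differentiation in $x$ produces $\Lip_{x,\varepsilon}Lg\le C_2\,\Lip_{x,\varepsilon}g$. For the H\"older norm I split the difference of the $k$-th summand at $b$ and at $b'$ into three contributions: (i) the change in the prefactor $\Pi_k(b)/\lambda_{A^kb}$, which is itself $\alpha$-H\"older in $b$ of order $\theta^k$; (ii) the change of the base point $A^kb\to A^kb'$, bounded via $||g||_{[\alpha]}$ and amplified by $\mu^{k\alpha}$; and (iii) the change of the fiber argument $\Pi_k(b)x\to\Pi_k(b')x$, controlled by $\Lip_{x,\varepsilon}g$. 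Each contribution is of order $\theta^k=(\mu^\alpha q)^k$, and the hypothesis $\theta<1$ from \eqref{eq_theta} makes the resulting series converge to give
\begin{equation*}
||Lg||_{[\alpha],\varepsilon}\le C_3\bigl(||g||_{[\alpha],\varepsilon}+\Lip_{x,\varepsilon}g+||g||_{C,\varepsilon}\bigr).
\end{equation*}

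Combining the two stages, every term in the resulting bounds on the three norms of $L\Phi h$ that multiplies one of $A_C,A_{\Lip},A_\alpha$ carries a factor of $\varepsilon$ inherited from the $x,x^2$ prefactors in $1+xh_b$ and $x+x^2h_b$. I therefore pick $A_C,A_{\Lip},A_\alpha$ strictly larger than the norms of $L\Phi h$ at $h\equiv 0$ in the respective senses, and then shrink $\varepsilon$ until the $h$-dependent corrections remain below the chosen safety margins; this closes the bootstrap and gives $L\Phi h\in N$. The main obstacle is the H\"older estimate in $b$, where the exponential amplification $\mu^{k\alpha}$ of base distances under $A^k$ must be balanced against the geometric contraction $q^k$ from $\Pi_k$; it is precisely the condition $\theta=\mu^\alpha q<1$ from \eqref{eq_theta} that keeps their product summable and makes the whole scheme close.
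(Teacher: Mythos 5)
Your overall scheme is the one the paper follows: bound the $C$, Lipschitz and H\"older norms of $\Phi h$ for $h\in N$ (with the coefficient of the ``diagonal'' constant vanishing as $\varepsilon\to 0$), push these bounds through the boundedness of $L$ in the $C$ and Lipschitz norms and through the mixed estimate $||Lg||_{[\alpha],\varepsilon}\le L_C||g||_{C,\varepsilon}+L_{[\alpha]}||g||_{[\alpha],\varepsilon}+L_{\Lip}\Lip_{x,\varepsilon}g$ inherited from the proof of Lemma \ref{lem_homological}, and then close by choosing the constants and $\varepsilon$. This is exactly the content of Propositions \ref{prop_L_Lipschitz}, \ref{prop_lip_calc} and \ref{prop_6} combined with \eqref{eq_6} and \eqref{eq_11}.

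However, the closing step as you state it would fail. It is not true that every term multiplying one of $A_C,A_{\Lip},A_{\alpha}$ carries a factor of $\varepsilon$: in the Lipschitz estimate for $\Phi h$, differentiating the factor $x$ in $2x\,h_b(x)\,Q_b(x+x^2h_b(x))$ produces the term $2h_bQ_b$, bounded by $2A_C||Q||_C$ with no $\varepsilon$ at all, so for general $h\in N$ the bound on $\Lip_{x,\varepsilon}(L\Phi h)$ tends, as $\varepsilon\to 0$, to roughly $\Lip_x L\,(\Lip_xQ+2||Q||_CA_C)$, which is strictly larger than the value $\Lip_x L\,\Lip_xQ$ attained at $h\equiv 0$. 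Hence your prescription ``pick each constant just above the norm of $L\Phi h$ at $h\equiv 0$ and then shrink $\varepsilon$'' does not work for $A_{\Lip}$ (and, analogously, $A_{\alpha}$ must be allowed to depend on $A_C$ and $A_{\Lip}$): the offending correction does not shrink with $\varepsilon$. The correct bookkeeping is triangular, as in the paper: only the coefficient of $A_C$ in the $C$-bound, of $A_{\Lip}$ in the Lipschitz bound, and of $A_{\alpha}$ in the H\"older bound vanish with $\varepsilon$, while the previously chosen constants may enter the remaining terms with $O(1)$ coefficients. So one fixes $A_C$ and $\varepsilon_C$ first, then chooses $A_{\Lip}$ large compared to the already fixed $A_C$-dependent term and shrinks to $\varepsilon_{\Lip}<\varepsilon_C$, and finally chooses $A_{\alpha}$ and $\varepsilon_{[\alpha]}<\varepsilon_{\Lip}$. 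With this ordering (which your opening sentence hints at but your final paragraph contradicts) your argument coincides with the paper's proof.
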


\begin{lem}\label{lem_contraction}[Contraction property]
There exists a constant $A>0$ such that for any sufficiently small $\varepsilon>0$ the operator $L \Phi$ acting in the space $\mathcal{N}$ (which depends on $A$ and on $\varepsilon$, see \eqref{eq_definition_space_and_metric}) is contracting in the continuous norm.
\end{lem}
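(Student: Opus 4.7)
The plan is to exploit the factorization $L\Phi$ and bound each factor separately. Lemma~\ref{lem_homological}~part 4 already tells us that $L$ is bounded in the $C$-norm on $\mathcal{M}$, with an operator norm $\|L\|$ independent of $\varepsilon$, so the contraction will ultimately come from showing that $\Phi$, restricted to the ball $\mathcal{N}$, is Lipschitz in the $C$-norm with a constant that is $O(\varepsilon)$ as $\varepsilon\downarrow 0$. Once such a bound is in hand, shrinking $\varepsilon$ relative to $\|L\|$ makes $L\Phi$ a contraction.

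To estimate $\Phi h_1 - \Phi h_2$, I would start from the normalized definition $x^2\Phi h(b,x) = R_b\bigl(x + x^2 h_b(x)\bigr)$ coming from \eqref{eq_shift_operator_def} and \eqref{eq_def_L_and_Phi_normalized}, and apply the mean value theorem in the $y$-variable of $R_b$:
\begin{equation*}
R_b(x + x^2 h_1) - R_b(x + x^2 h_2) = R_b'(\xi)\,x^2 (h_1 - h_2),
\end{equation*}
where $\xi = \xi(b,x)$ lies between $x + x^2 h_1(b,x)$ and $x + x^2 h_2(b,x)$. For $(b,x) \in \mathbb{T}^d \times [0,\varepsilon]$ and $\|h_i\|_{C,\varepsilon} \le A$, the point $\xi$ sits in $[0,\varepsilon(1+\varepsilon A)]$. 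The assumption $R_b(y) = O(y^2)$ together with the $C^2$-smoothness of $f_b$ and the compactness of $\mathbb{T}^d$ (which makes $b\mapsto f_b$ a continuous map into $C^2[0,1]$, hence bounded) yields both $R_b'(0) = 0$ and $M := \sup_b \|R_b\|_{C^2[0,1]} < \infty$. Therefore $|R_b'(\xi)| \le M|\xi| \le C(A)\,\varepsilon$ for small $\varepsilon$. Dividing by $x^2$ and taking the supremum gives
\begin{equation*}
\|\Phi h_1 - \Phi h_2\|_{C,\varepsilon} \le C(A)\,\varepsilon\,\|h_1 - h_2\|_{C,\varepsilon}.
\end{equation*}

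Composing these two estimates,
\begin{equation*}
\|L\Phi h_1 - L\Phi h_2\|_{C,\varepsilon} \le \|L\|\cdot C(A)\cdot\varepsilon\cdot \|h_1 - h_2\|_{C,\varepsilon},
\end{equation*}
so fixing $A$ first (for instance $A = A_C$, the constant supplied by Lemma~\ref{lem_shift}) and then choosing $\varepsilon < (2\|L\|\,C(A))^{-1}$ produces a contraction factor at most $1/2$. This matches the order in which $A$ and $\varepsilon$ are declared just before \eqref{eq_definition_closed_subspace_N}.

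The main delicate point, such as it is, will be keeping $\|L\|$ strictly independent of $\varepsilon$: this is guaranteed by Lemma~\ref{lem_homological}~part 4, whose bound is stated on the full cylinder $M$ and therefore restricts to $\mathcal{M}_\varepsilon$ with no loss. Everything else is routine. The real mechanism behind the contraction is the quadratic vanishing $R_b(x) = O(x^2)$ at the invariant layer $\{x=0\}$, which forces the Lipschitz constant of $\Phi$ in the $C$-norm to tend to zero with $\varepsilon$.
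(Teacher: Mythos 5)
Your proof is correct and takes essentially the same route as the paper: the paper likewise uses that $L$ is linear and bounded in the $C$-norm uniformly in $\varepsilon$ (via \eqref{eq_marie}), and reduces the lemma to showing that $\Phi$ is Lipschitz on the ball $\mathcal{N}$ in the $C$-norm with constant $O(\varepsilon)$. The only cosmetic difference is how that Lipschitz bound is obtained: you apply the mean value theorem to $R_b$ and use $R_b'(0)=0$ together with a uniform $C^2$ bound, while the paper expands $\Phi h=(1+xh)^2Q(b,x+x^2h)$ and estimates term by term with $\|Q\|_C$ and $\Lip_x Q$ --- the underlying mechanism, the quadratic vanishing of $R_b$ at the invariant boundary, is identical in both arguments.
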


\emph{Proof of Theorem \ref{thm:new:2}}.

Now the proof follows: first, we take $\varepsilon$ defined by Lemma \ref{lem_shift} and fix all constants $A_C, A_{\Lip}, A_{\alpha}$ provided by the same lemma. Then we diminish $\varepsilon$ for Lemma \ref{lem_contraction} to hold. Then the set $\mathcal{N}$  corresponding to such an $\varepsilon$ and a constant $A=A_C$ has a $C$-norm on it defining a complete metric space. Operator $L \Phi$ acts in this space and, by Lemma \ref{lem_contraction}, is a contracting map. Note that the set $N$ defined in \eqref{eq_definition_closed_subspace_N} is a closed subspace of $\mathcal{N}$ since $A=A_C$. This subspace $N \subset{\Hae}$ with a fixed H\"older constant $\alpha$ is preserved by $L \Phi$. Then, by contraction mapping principle, $L \Phi$ has a fixed point $h \in N$ (and hence in $\Hae$) which gives H\"older conjugacy of the initial skew product with its linearization. Strictly speaking, the H\"older property is proven not for the conjugacy but for its quadratic part divided by $x^2$ but the H\"older property of the conjugacy follows just because $x$ is bounded.
\hfill $\Box$

\section{Proof of Lemma \ref{lem_homological}: homological equation solution}\label{section_homological}
From the form \eqref{eq_homological_equation_Q} of the homological equation we deduce that ${h}(b,x)$ can be represented as
\begin{equation}\label{eq_homological_representation}
h=-\lambda^{-1} Q + \lambda h \circ F_0
\end{equation}
Let us take the right composition of this equation with the normalized map $F_0$ given by \eqref{eq_linearization_form}. And then let us apply the operator of multiplication by $\lambda$ to this equation. The equality \eqref{eq_homological_representation} implies
\begin{equation}\label{eq_homological_representation_second_term}
\lambda \left(h \circ F_0\right) = -\lambda\left(\lambda^{-1} \circ A\right) Q \circ F_0 + \lambda \left(\lambda \circ A \right) h \circ F_0^2
\end{equation}

Note that the left side of \eqref{eq_homological_representation_second_term} is equal to one of the terms in the right hand side of \eqref{eq_homological_representation}. We continue such a process of taking right composition with $F_0$ and multiplying by $\lambda$. Thus we obtain the infinite sequence of equations that can be all summed up. Let us sum the first $N+1$ of them, then we will have

\begin{equation}\label{eq_for_h}
h_b=\Pi_{N+1}(b) h_b \circ F_0^{N+1} -  \sum_{k=0}^{N} \frac{\Pi_k(b) Q \circ F_0^k}{\lambda_{A^k b}}
\end{equation}

Let us now pass to the limit when $N \rightarrow \infty$: since $h \in \mathcal{N}$, $||h||_C \leq A$ and $\lambda$ is bounded by some $q<1$, we have that the first term on the right-hand side of \eqref{eq_for_h} is bounded by $A q^{N+1}$ and hence tends to $0$. Thus we obtain formula \eqref{eq_solution_of_homological_equation_formal_series} for $h(b,x)$.

Since $F$ is a diffeomorphism, then $\forall b \in \Td$ we have: $\lambda_b \neq 0$. Then, since $\lambda_b$ is a continuous function on a compact manifold $\Td$, there exists a lower bound $D>0$ such that
 \begin{equation}
\lambda_b \geq D>0 \;\;\;\;\; \forall b \in \Td.
 \end{equation}
Then, since obviously
\begin{equation}\label{eq_obvious_2}
|\Pi_k(b)|\leq q^k ,
\end{equation}
the series \eqref{eq_solution_of_homological_equation_formal_series} is bounded by a converging number series
\begin{equation}\label{eq_5}
\sum_{k=0}^{\infty} \frac{q^k}{D} ||Q||_C= \frac{||Q||_C}{D (1-q)}
\end{equation}
So by the Weierstrass majorant theorem, its sum is a continuous function on $M$, and the normalized homological operator $L$ is bounded in continuous norm. Namely,

\begin{equation}\label{eq_marie}
||L||_C \leq \frac{1}{D (1-q)}
\end{equation}

Solution $h_b(x)$ is as smooth in $x$ as $Q$ is: that can be verified by differentiation of the series \eqref{eq_solution_of_homological_equation_formal_series} and repetitive applying of Weierstrass majorant theorem. Convergence of the series for the derivative of the solution of homological equation $h$ will be even faster than the convergence of the series for the function itself: indeed, the coefficients in the series \eqref{eq_solution_of_homological_equation_formal_series} will be multiplied by the factors $\Pi_k(b)$ which are rapidly decreasing.

So assertions $1, 2$ and $4$ of Lemma \ref{lem_homological} are proven. What is left to prove is that H\"older property with exponent $\alpha$ is preserved by operator $L$. We will need the following

\begin{prop}
In the setting of Theorem \ref{thm:new:1}, let the H\"older property \eqref{eq:new:Holder_f} for $f_b$ and some $k$ hold. Then, for $\lambda_b$ and $Q_b(x)$ given by $f_b(x) = \lambda_b x+x^2 Q_b(x)$ we have H\"older properties: for $\lambda_b$ and for the same $k$ as in \eqref{eq:new:Holder_f}; for $Q_b(x)$ and for $k-2$.
\end{prop}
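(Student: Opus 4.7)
The plan is to extract $\lambda_b$ and $Q_b$ directly from the Taylor expansion of $f_b$ at the origin, and then to pull back the hypothesis $||f_b - f_{b'}||_{C^k} \leq C_k||b-b'||^{\beta}$ through these identifications. No new technique is required; the key is a clean integral representation of $Q_b$ that makes the loss of two derivatives manifest and avoids any ad hoc estimate near $x = 0$.

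The estimate on $\lambda_b$ is immediate from the $C^1$ part of the hypothesis. Since $\lambda_b = f_b'(0)$, one has
\begin{equation*}
|\lambda_b - \lambda_{b'}| \leq ||f_b - f_{b'}||_{C^1} \leq ||f_b - f_{b'}||_{C^k} \leq C_k\,||b-b'||^{\beta},
\end{equation*}
so $\lambda_b$ is H\"older in $b$ with exponent $\beta$, the constant controlled by $C_k$.

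For $Q_b$ the plan is to apply Taylor's theorem with integral remainder. Since $R_b(x) = f_b(x) - \lambda_b x$ satisfies $R_b(0) = R_b'(0) = 0$, a change of variable $t = sx$ gives $R_b(x) = x^2\int_0^1 (1-s) f_b''(sx)\,ds$, whence
\begin{equation*}
Q_b(x) = \int_0^1 (1-s)\, f_b''(sx)\,ds.
\end{equation*}
Differentiating under the integral sign $l$ times in $x$ yields, for every $l \leq k-2$,
\begin{equation*}
\frac{\partial^l Q_b}{\partial x^l}(x) = \int_0^1 (1-s)\, s^l\, f_b^{(l+2)}(sx)\,ds.
\end{equation*}
Subtracting the analogous expression for $b'$ and bounding the integrand pointwise by $||f_b - f_{b'}||_{C^{l+2}}$ gives
\begin{equation*}
\left|\frac{\partial^l Q_b}{\partial x^l}(x) - \frac{\partial^l Q_{b'}}{\partial x^l}(x)\right| \leq \frac{C_k}{(l+1)(l+2)}\,||b-b'||^{\beta}
\end{equation*}
uniformly in $x \in [0,1]$. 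Summing over $l = 0, 1, \ldots, k-2$ then yields $||Q_b - Q_{b'}||_{C^{k-2}} \leq \tilde{C}_{k-2}\,||b-b'||^{\beta}$, as required.

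I do not expect any serious obstacle. The whole argument rests on Taylor's theorem with integral remainder and on the fact that differentiation in $x$ under the integral sign commutes with taking differences in $b$. The only conceptual point to flag is the division by $x^2$ that defines $Q_b$: the integral representation above is precisely what absorbs that division smoothly across $x=0$ and explains why exactly two derivatives are lost in passing from $f_b$ to $Q_b$.
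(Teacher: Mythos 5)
Your proof is correct and follows essentially the same route as the paper: both rest on the integral representation $Q_b(x)=\int_0^1(1-s)\,f_b''(sx)\,ds$ obtained from Taylor's theorem with integral remainder and the substitution $t=sx$ (the paper phrases this as an analogue of Hadamard's lemma). Your write-up is in fact slightly more complete, since you differentiate under the integral to handle all derivatives up to order $k-2$ explicitly, whereas the paper only writes out the $C^0$ estimate $\|\psi\|_{C}\le\|\varphi\|_{C^2}$ and asserts that the $C^{k-2}$ Hölder statement follows.
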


\begin{proof}
The property for $\lambda_b$ is obvious since $\lambda_b = \frac{\partial f_b(x)}{\partial x} \left| \right. _{x=0}$. The property for $Q_b(x)$ follows from an analogue of Hadamard's lemma:
for $\varphi \in C^2_{[0,1]}, \varphi(0)=\varphi'(0)=0$ and $\psi=\frac{\varphi}{x^2}$ we have
\begin{equation}\label{estimate}
|| \psi ||_{C} \leq || \varphi ||_{C^2}
\end{equation}
 This follows from the well-known formula $\psi(x)=\int_0^x (x-t) \varphi''((t)) dt$. The coordinate change $t=xs, s \in [0,1]$ implies $\psi(x)=x^2 \int_0^1 (1-s) f''(xs) ds$. From this \eqref{estimate} follows. The needed corollary that $Q_b$ is H\"older continuous assuming $f_b$ is H\"older as an element of $C^2$, follows.
\end{proof}

To prove assumption $3$ of Lemma \ref{lem_homological} let us denote by $C_Q:=||Q||_{[\alpha]}$ and $C_{\lambda}:=||\lambda||_{[\alpha]}$ the H\"older constants for functions $Q$ and $\lambda$ respectively. We need to find such $C>0$ that for all $b_1, b_2 \in \Td$:

\begin{equation}\label{eq_definition_of_Holder_constant_for_h}
|h_{b_1}(x)-h_{b_2}(x)| \leq C ||b_1-b_2||^{\alpha}
\end{equation}

Note that even though H\"older exponents for $Q$ and $\lambda$ can be close to $1$, the H\"older exponent for the solution $h$ of normalized homological equation will be close to zero.

For each $k \in \Z_{+}$ denote
\begin{equation}\label{eq_definition_P_n}
P_k(b):=\frac{\Pi_k(b)}{\lambda_{A^k b}}
\end{equation}
Then, obviously,
\begin{equation}\label{eq_obvious}
|P_k(b)| \leq q^k D.
\end{equation}
Let $Q_{k}(b,x):=Q \circ F_0^{k} (b,x)$. Then the solution $h$ can be written in the form
\begin{equation}
h_b(x)=-\sum_{k=0}^{\infty} P_k(b) Q_k(b,x)
\end{equation}
Take $b_1, b_2 \in \Td$ and denote $Q_{k,j}:=Q \circ F_0^{k} (b_j, x), j=1,2$.
Then
\begin{equation}
\left|h_{b_1}(x)-h_{b_2}(x)\right| = \sum_{k=0}^{\infty} \left[
(P_k(b_1)-P_k(b_2))Q_{k,1}+P_k(b_2) (Q_{k,1}-Q_{k,2})
\right]
\end{equation}
So we have the estimate
\begin{equation}\label{eq_main_estimate_Holder}
\left|h_{b_1}(x)-h_{b_2}(x)\right|  \leq \sum_{k=0}^{\infty} \theta_{1,k}(b_1,b_2)+\theta_{2,k}(b_1,b_2)
\end{equation}
where
\begin{equation}\label{def_theta}
\theta_{1,k}(b_1,b_2)=\left|P_k(b_1)-P_k(b_2)\right| ||Q||_C,  \,\,\theta_{2,k}(b_1,b_2)=|P_k(b_2)| |Q_{k,1}-Q_{k,2}|
\end{equation}

Let us formulate some propositions that we will need, and postpone their proofs to the Appendix, Section \ref{section_appendix}.
\begin{prop}\label{proposition_Pi_n_is_Holder}
Function $\Pi_n(b)$ defined as a product of $\lambda_b$ in the first $n$ points of the orbit of a linear diffeomorphism $A$, see \eqref{eq_definition_pi}, is H\"older continuous with the exponent $\alpha$, see \eqref{eq:new:alpha} ,  and
\begin{equation}||\Pi_n||_{[\alpha]} \leq
\frac{C_{\lambda} \theta^n}{(\mu^{\alpha}-1)q}
\end{equation}
where $C_{\lambda}$ is the H\"older constant for $\lambda$, $\theta$ is defined in \eqref{eq_theta}, and $\mu$ is the largest magnitude of eigenvalues of $A$.
\end{prop}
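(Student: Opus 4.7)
The plan is to prove the bound via a standard telescoping decomposition of the product, combined with the Hölder continuity of $\lambda_b$ and a control on the growth of $A^j$ acting on the difference $b_1 - b_2$.

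First I would write
\begin{equation*}
\Pi_n(b_1) - \Pi_n(b_2) = \sum_{j=0}^{n-1} \Bigl(\prod_{i=0}^{j-1} \lambda_{A^i b_2}\Bigr)\, \bigl(\lambda_{A^j b_1} - \lambda_{A^j b_2}\bigr)\, \Bigl(\prod_{i=j+1}^{n-1} \lambda_{A^i b_1}\Bigr),
\end{equation*}
which is the usual trick of replacing one factor at a time. Since $\lambda_b \le q < 1$ pointwise, the two surrounding products are bounded in absolute value by $q^{j}$ and $q^{n-1-j}$, respectively, so together by $q^{n-1}$.

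Next I would estimate the middle factor using the Hölder property of $\lambda$ together with the expansion bound for $A$. Choosing a norm on $\Td$ adapted to the hyperbolic automorphism $A$ (so that $\|A v\| \le \mu \|v\|$, which only affects the constant $C_\lambda$), one obtains
\begin{equation*}
\bigl|\lambda_{A^j b_1} - \lambda_{A^j b_2}\bigr| \le C_\lambda \|A^j b_1 - A^j b_2\|^{\alpha} \le C_\lambda \mu^{j\alpha} \|b_1 - b_2\|^{\alpha}.
\end{equation*}
Plugging this into the telescoping sum and factoring out $q^{n-1}\|b_1-b_2\|^\alpha$ gives
\begin{equation*}
|\Pi_n(b_1) - \Pi_n(b_2)| \le C_\lambda q^{n-1} \|b_1-b_2\|^{\alpha} \sum_{j=0}^{n-1} \mu^{j\alpha} \le C_\lambda q^{n-1} \|b_1-b_2\|^{\alpha} \cdot \frac{\mu^{n\alpha}}{\mu^{\alpha}-1}.
\end{equation*}
Using $\theta = \mu^\alpha q$ and regrouping $q^{n-1} \mu^{n\alpha} = \theta^n / q$ yields exactly
\begin{equation*}
|\Pi_n(b_1) - \Pi_n(b_2)| \le \frac{C_\lambda\, \theta^n}{(\mu^\alpha-1)\, q}\, \|b_1-b_2\|^{\alpha},
\end{equation*}
which is the asserted bound. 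Note that the condition $\theta < 1$ from \eqref{eq_theta} is precisely what ensures this bound decreases in $n$ (it is not needed to establish Hölder continuity for a given $n$, but it is what makes the estimate useful when summing the series \eqref{eq_solution_of_homological_equation_formal_series}).

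The main potential obstacle is the step $\|A^j v\| \le \mu^j \|v\|$, since a hyperbolic matrix with largest eigenvalue magnitude $\mu$ does not in general satisfy $\|A\| \le \mu$ in the standard Euclidean norm (there may be a Jordan-block prefactor). I would handle this by working throughout in a norm adapted to $A$, in which the inequality holds exactly; the Hölder constant $C_\lambda$ is then defined with respect to this norm, and all statements of Theorem~\ref{thm:new:1} remain unaffected since equivalent norms yield equivalent Hölder classes (with possibly different constants). Apart from this choice, the argument is a direct telescoping plus geometric sum, and requires no additional ingredients.
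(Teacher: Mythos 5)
Your proof is correct and follows essentially the same route as the paper: a telescoping (in the paper, recursive) decomposition of $\Pi_n(b_1)-\Pi_n(b_2)$, bounding the surrounding factors by $q^{n-1}$, applying the H\"older property of $\lambda$ with $\|A^j b_1-A^j b_2\|^{\alpha}\le\mu^{j\alpha}\|b_1-b_2\|^{\alpha}$, and summing the geometric series to get $q^{n-1}\mu^{n\alpha}/(\mu^{\alpha}-1)=\theta^n/\bigl((\mu^{\alpha}-1)q\bigr)$. Your remark about working in a norm adapted to $A$ (so that $\|Av\|\le\mu\|v\|$) addresses a point the paper passes over silently and is a harmless, correct refinement.
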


\begin{prop}\label{proposition_P_n_is_Holder}
Function $P_n(b)$ defined by \eqref{eq_definition_P_n} is H\"older with the exponent $\alpha$, and
\begin{equation}||P_n||_{[\alpha]} \leq D^2 C_{\lambda} B \theta^n,
\end{equation}where $B$ depends only on the initial map, the precise formula for $B$ is given below, see \eqref{eq_B}.
\end{prop}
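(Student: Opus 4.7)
The plan is to reduce the claim to the already-proved Hölder bound on $\Pi_n$ by a standard quotient decomposition. Write
\[
P_n(b_1)-P_n(b_2)=\frac{\Pi_n(b_1)-\Pi_n(b_2)}{\lambda_{A^n b_1}}+\Pi_n(b_2)\cdot\frac{\lambda_{A^n b_2}-\lambda_{A^n b_1}}{\lambda_{A^n b_1}\,\lambda_{A^n b_2}},
\]
so that the variation of the numerator and the denominator are handled separately.

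For the first term I would use $1/\lambda_{A^n b_1}\le 1/D$ together with Proposition \ref{proposition_Pi_n_is_Holder}, which supplies
\[
|\Pi_n(b_1)-\Pi_n(b_2)|\le \frac{C_\lambda\theta^n}{(\mu^\alpha-1)q}\,\|b_1-b_2\|^\alpha.
\]
For the second term I would combine three ingredients: the trivial bound $|\Pi_n(b_2)|\le q^n$ from \eqref{eq_obvious_2}, the lower bound $\lambda_{A^n b_i}\ge D$ applied twice in the denominator, and the Hölder estimate on $\lambda$. Since $A$ is linear with largest eigenvalue modulus $\mu$, we have $\|A^n b_1-A^n b_2\|\le \mu^n\|b_1-b_2\|$, hence
\[
|\lambda_{A^n b_1}-\lambda_{A^n b_2}|\le C_\lambda\mu^{n\alpha}\|b_1-b_2\|^\alpha.
\]

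Adding the two pieces gives
\[
|P_n(b_1)-P_n(b_2)|\le C_\lambda\,\|b_1-b_2\|^\alpha\left[\frac{\theta^n}{D(\mu^\alpha-1)q}+\frac{q^n\mu^{n\alpha}}{D^2}\right].
\]
The key cancellation is that, by the very definition \eqref{eq_theta} of $\theta=\mu^\alpha q$, we have $q^n\mu^{n\alpha}=\theta^n$; thus \emph{both} summands carry the same geometric factor $\theta^n$ and may be gathered into
\begin{equation}\label{eq_B}
B=\frac{1}{(\mu^\alpha-1)q\,D}+\frac{1}{D^2},
\end{equation}
yielding the advertised estimate $\|P_n\|_{[\alpha]}\le C_\lambda B\,\theta^n$ (up to the harmless prefactor $D^2$ recorded in the statement, which is simply absorbed into the definition of $B$).

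There is no genuine obstacle here: once Proposition \ref{proposition_Pi_n_is_Holder} is in hand, the content is entirely bookkeeping. The only point requiring attention is that the restriction $\alpha\le\log_\mu q$ in \eqref{eq:new:alpha} is exactly what is needed to make $\mu^{n\alpha}q^n=\theta^n$ decay geometrically; this is the same balance that drove Proposition \ref{proposition_Pi_n_is_Holder}, which is why the Hölder exponent $\alpha$ is preserved when dividing $\Pi_n$ by $\lambda_{A^n b}$ rather than deteriorating.
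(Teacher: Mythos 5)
Your proof is correct and follows essentially the same route as the paper: a two-term quotient decomposition, Proposition \ref{proposition_Pi_n_is_Holder} for the variation of $\Pi_n$, the bounds $|\Pi_n|\le q^n$ and $\lambda\ge D$ together with the H\"older continuity of $\lambda\circ A^n$ for the remaining term, and the identity $q^n\mu^{n\alpha}=\theta^n$. The only differences are cosmetic: the paper first clears denominators and rewrites the numerator via $\Pi_{n+1}$ (so Proposition \ref{proposition_Pi_n_is_Holder} is applied at index $n+1$ and a factor $2$ appears), which is why your constant $B$ differs slightly from \eqref{eq_B}; moreover your placement of the factors $1/D$, $1/D^2$ is the consistent reading of the lower bound $\lambda_b\ge D$, whereas the paper writes $D$, $D^2$ there.
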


Now, using Proposition \ref{proposition_P_n_is_Holder} we can prove that

\begin{equation}\label{eq_theta_one}
\theta_{1,k} (b_1, b_2) \leq ||Q||_C D^2  C_{\lambda} B \theta^k \go
\end{equation}

The estimate of $\theta_{2,k}$ is somewhat lengthier.

\begin{prop}\label{prop_estimate_theta_two}
Function $\theta_{2,k}(b_1,b_2)$ defined in \eqref{def_theta} is H\"older with the exponent $\alpha$, and
\begin{equation}\label{eq_49}
|| \theta_{2,k}||_{[\alpha]} \leq \theta^k D
\left(  C_Q + q^{k-1} \Lip_x Q \frac{C_{\lambda}}{\mu^{\alpha}-1}\right)
\end{equation}
\end{prop}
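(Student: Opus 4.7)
The plan is to bound the two factors in $\theta_{2,k}(b_1,b_2) = |P_k(b_2)| \cdot |Q_{k,1} - Q_{k,2}|$ separately. Recall $F_0^k(b,x) = (A^k b, \Pi_k(b)\, x)$, so $Q_{k,j} = Q_{A^k b_j}(\Pi_k(b_j)\, x)$. Insert the intermediate term $Q_{A^k b_2}(\Pi_k(b_1)\, x)$ and write
\begin{equation*}
Q_{k,1} - Q_{k,2}
= \bigl[Q_{A^k b_1}(\Pi_k(b_1) x) - Q_{A^k b_2}(\Pi_k(b_1) x)\bigr]
+ \bigl[Q_{A^k b_2}(\Pi_k(b_1) x) - Q_{A^k b_2}(\Pi_k(b_2) x)\bigr].
\end{equation*}
The first bracket is a variation in the base point of $Q$ at fixed fiber coordinate, so by H\"older continuity of $Q$ in $b$ (with constant $C_Q$) and the bound $\|A^k b_1 - A^k b_2\| \leq \mu^k \|b_1 - b_2\|$ for the hyperbolic automorphism $A$, it is at most $C_Q \mu^{k\alpha} \|b_1-b_2\|^\alpha$. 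The second bracket is a variation in the fiber coordinate at fixed base point, so by the fiberwise Lipschitz norm of $Q$ it is at most $\Lip_x Q \cdot x \cdot |\Pi_k(b_1) - \Pi_k(b_2)|$, which by Proposition \ref{proposition_Pi_n_is_Holder} is bounded by $\Lip_x Q \cdot x \cdot \tfrac{C_\lambda \theta^k}{(\mu^\alpha - 1) q}\|b_1-b_2\|^\alpha$.

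Using $x \leq 1$ on $M$ and combining with the trivial bound $|P_k(b_2)| \leq q^k D$ from \eqref{eq_obvious}, one gets
\begin{equation*}
\theta_{2,k}(b_1,b_2) \leq q^k D \, \|b_1-b_2\|^\alpha \left[ C_Q \mu^{k\alpha} + q^{-1} \Lip_x Q \cdot \tfrac{C_\lambda \theta^k}{\mu^\alpha - 1} \right].
\end{equation*}
Now use the identity $q^k \mu^{k\alpha} = \theta^k$ (by \eqref{eq_theta}) in the first term, and factor $\theta^k D$ out of both terms. This leaves $q^{k-1}$ in the second term and gives exactly \eqref{eq_49}.

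The main technical subtlety is the splitting step: one must be careful that the ``middle'' function $Q_{A^k b_2}(\Pi_k(b_1) x)$ is evaluated at the mixed argument so that each of the two resulting differences involves variation in only one of the two arguments of $Q$. Everything else is bookkeeping — the H\"older exponent is absorbed into the expansion rate $\mu^{k\alpha}$, which combines with the decay $q^k$ of $|P_k(b_2)|$ to produce the sharp factor $\theta^k$, while the fiber-variation estimate reuses Proposition \ref{proposition_Pi_n_is_Holder} essentially verbatim. The only other point of care is the appearance of the factor $q^{-1}$ from the denominator in Proposition \ref{proposition_Pi_n_is_Holder}, which converts the $q^k$ prefactor into the $q^{k-1}$ appearing in \eqref{eq_49}.
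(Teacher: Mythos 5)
Your proof is correct and follows essentially the same route as the paper: the bound $|P_k(b_2)|\le q^kD$ from \eqref{eq_obvious}, the insertion of the mixed term $Q_{A^k b_2}(\Pi_k(b_1)x)$ to separate base and fiber variation, the H\"older estimate with expansion factor $\mu^{k\alpha}$ combined with $q^k\mu^{k\alpha}=\theta^k$, and the use of Proposition \ref{proposition_Pi_n_is_Holder} for the fiber term yielding the $q^{k-1}$ factor. Nothing further is needed.
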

The proof of this proposition is using only the triangle inequality and we postpone it till the appendix.

Inserting estimates on $\theta_{1,k}$ and $\theta_{2,k}$ from \eqref{eq_theta_one} and \eqref{eq_49} into the inequality \eqref{eq_main_estimate_Holder}, we can finally use our special choice of $\alpha$. It is in this place where we crucially use the fact that $\theta<1$ to establish the convergence of estimating series in the right-hand side of \eqref{eq_main_estimate_Holder}. By simple computation of the sum of a geometric progression, we obtain that $h$ is H\"older, and \eqref{eq_definition_of_Holder_constant_for_h} holds for some $C_h$. The expicit form of $C_h$ is not important for the proof of this lemma, but it will be used in the proof of Lemma \ref{lem_shift}. That's why we write it out explicitely:
\begin{equation}\label{eq_Holder_estimate_for_homological_operator}
C_h=||Q||_C  L_C+C_Q L_{[\alpha]}+\Lip_x Q L_{\Lip}.
\end{equation}
where
\begin{equation}\label{eq_10}
L_C=\frac{D^2 C_{\lambda} B}{1-\theta}, L_{[\alpha]}=\frac{D}{1-\theta}, L_{\Lip}=\frac{C_{\lambda}}{\left(\mu^{\alpha}-1\right) q}\frac{1}{1-\theta q}.
\end{equation}
This completes the proof of Lemma \ref{lem_homological}.
\hfill $\Box$
\section{Proof of Lemma \ref{lem_shift}: the shift operator}\label{section_shift}
Take some $h \in N$ and let us estimate continuous, Lipschitz and H\"older norms of its image under the composition of operators $L$ and $\Phi$.

The plan of the proof is the following: we will first show that there exist constants $\varepsilon_C>0$ and $A=A_C>0$ such that the space $\mathcal{N}$ defined by \eqref{eq_definition_space_and_metric} is mapped by $L \Phi$ to itself. So the operator $L \Phi$ doesn't increase too much the continuous norm if we consider it on an appropriate space.

In the following step, we will diminish even more the $\varepsilon$-neighborhood of the base in which the functions are defined, and search for $\varepsilon_{\Lip}<\varepsilon_C$ and we will also search for a good bound $A_{\Lip}$ in \eqref{eq_definition_space_and_metric}. We will find such $\varepsilon_{\Lip}$ and $A_{\Lip}$ that $L \Phi$ won't increase the Lipschitz norm of the function $h$ with conditions $||h||_C \leq A_C, ||h||_{\Lip}\leq A_{\Lip}$ in the vicinity of the base.

And, in the final step,  we will find $\varepsilon_{\alpha}<\varepsilon_{\Lip}$ and $A_{\alpha}$ such that the space $N$ defined by \eqref{eq_definition_closed_subspace_N} is preserved by $L \Phi$.

From the definition \eqref{eq_shift_operator_def} of the shift operator $\bar{\Phi}$ we have

\begin{equation}
\bar{\Phi}\bar{h} (b,x)=R_b \left(x+\bar{h}_b(x)\right)= (x+\bar{h}_b(x))^2 Q(b, x+x^2 h_b(x))=x^2 (1+x h_b (x))^2 Q(b, x+x^2 h_b(x))
\end{equation} hence

\begin{equation}\label{eq_shift_operator_normalized_form}
\Phi h=(1+x h)^2 Q(b,x+x^2 h)
\end{equation}
Using the definition \eqref{eq_definition_closed_subspace_N} of the subspace $N$ as well as the estimate \eqref{eq_5} and the expression \eqref{eq_shift_operator_normalized_form}, for any $h \in N$ we have

\begin{equation}\label{eq_6}
||L \Phi h||_{C, \varepsilon} \leq \frac{1}{D(1-q)} ||\Phi h||_{C, \varepsilon} \leq \frac{||Q||_C}{D (1-q)} \left(1+\varepsilon A_C\right)^2
\end{equation}

Hence let us first fix any
\begin{equation}\label{eq_7}
A_C > \frac{||Q||_C}{D(1-q)}
\end{equation}
and then choose $\varepsilon=\varepsilon_C$ such that

\begin{equation}
\frac{||Q||_C}{D(1-q)} (1+\varepsilon A_C)^2<A_C
\end{equation}

Note that in the definition of the space $\mathcal{N}$ the constant $A$ bounding the norm should be greater than $A_C$ defined by \eqref{eq_7}.

For the Lipschitz norm bound, we will need the proposition concerning only the homological operator: it preserves the space of smooth on fiber functions. Since we will deal with derivatives of functions along the fiber let us agree on notations: let us denote the $l$-th derivative of a function $h(b,x)$ with respect to fiber coordinate $x$ as $h^{(l)}, l\in \mathbb{N}$.

\begin{prop}\label{prop_L_Lipschitz}
The operator $L$ is bounded in the Lipschitz norm: there exists a constant $\Lip_x L$ such that for any $h \in \mathcal{M}$ the following holds:
 $$\Lip_x \left( Lh \right) \leq \Lip_x L \cdot  \Lip_x h.$$
Moreover, if for any $b \in \Td$, the function $h(b,\cdot) \in C^l$, then $Lh$ has the same smoothness as well and
\begin{equation}\label{eq_derivative}
\left|\left|(Lh)^{(l)}\right|\right|_C \leq C_k(L) \left|\left|h^{(l)}\right|\right|_C
\end{equation}
\end{prop}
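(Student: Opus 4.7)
\emph{Proof proposal.} The approach is to work directly with the explicit series representation of $L$ supplied by Lemma \ref{lem_homological}. For $h \in \mathcal{M}$ this gives
\begin{equation*}
(Lh)(b,x) = -\sum_{k=0}^{\infty} P_k(b) \, h\bigl(A^k b, \Pi_k(b)\, x\bigr),
\end{equation*}
with the elementary bounds $|\Pi_k(b)| \leq q^k$ and $|P_k(b)| \leq q^k/D$ already established in the proof of that lemma. Both assertions of the proposition will then follow from applying the Weierstrass $M$-test to this series and to its formal $x$-derivatives.

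For the Lipschitz bound I would estimate termwise using the Lipschitzness of $h$ in $x$:
\begin{equation*}
|(Lh)(b,x) - (Lh)(b,y)| \leq \sum_{k=0}^{\infty} |P_k(b)| \cdot |\Pi_k(b)| \cdot \Lip_x h \cdot |x-y| \leq \frac{\Lip_x h}{D(1-q^2)} \, |x-y|,
\end{equation*}
so $\Lip_x L := \frac{1}{D(1-q^2)}$ works. For the smoothness claim the $l$-th formal $x$-derivative reads
\begin{equation*}
(Lh)^{(l)}(b,x) = -\sum_{k=0}^{\infty} P_k(b) \, \Pi_k(b)^l \, h^{(l)}\bigl(A^k b, \Pi_k(b)\, x\bigr),
\end{equation*}
each summand being uniformly bounded by $\frac{q^{k(l+1)}}{D} \, ||h^{(l)}||_C$. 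This geometric majorant supplies uniform convergence of the series itself together with all of its termwise derivatives of order $\le l$, which is precisely what the standard theorem on termwise differentiation needs in order to bring $\partial_x^l$ inside the sum. One then obtains $||(Lh)^{(l)}||_C \leq \frac{||h^{(l)}||_C}{D(1-q^{l+1})}$, so $C_k(L) := \frac{1}{D(1-q^{l+1})}$ is admissible.

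There is no serious obstacle here: the whole argument rests on the exponential decay $|P_k(b)\, \Pi_k(b)^l| = O(q^{k(l+1)})$ coming from the attracting nature of the fiber dynamics, combined with the classical criterion for termwise differentiation of a series. The only mild check is that the uniform convergence must hold simultaneously for every intermediate order $\partial_x^j$, $j \le l$, but this is automatic from the same geometric estimate applied with $j$ in place of $l$.
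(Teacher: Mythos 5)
Your proposal is correct and follows essentially the same route as the paper: termwise estimation of the explicit series $(Lh)(b,x)=-\sum_k P_k(b)\,h(A^kb,\Pi_k(b)x)$ using $|P_k(b)\Pi_k(b)^l|=O(q^{k(l+1)})$, with the Weierstrass criterion justifying termwise differentiation. The only (immaterial) difference is in the constants, where you write $\frac{1}{D(1-q^{l+1})}$ while the paper writes $\frac{D}{1-q^{l+1}}$ — a discrepancy traceable to the paper's own inconsistent use of $D$ versus $1/D$ as the lower bound for $\lambda_b$, and your version is the one consistent with $\lambda_b\ge D$.
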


The proof of this proposition is an easy consequence of the explicit form \eqref{eq_solution_of_homological_equation_formal_series} for the solution of the normalized homological equation, and we give it in the Appendix, Section \ref{section_appendix}.

Now let us pass to the Lipschitz norm $ \Lip_{x,\varepsilon} [L \Phi h] \leq \Lip_x L  \times \Lip_{x, \varepsilon} \Phi h$. By using the simple arguments one can prove the following

\begin{prop}\label{prop_lip_calc}
There exist polynomials $T_3(\varepsilon)$ and $T_4^0(\varepsilon)$ of degrees respectively $3$ and $4$ such that $T_4^0(0)=0$ and for any $h \in N$ holds
\begin{equation}
\Lip_{x,\varepsilon} [\Phi h] \leq T_3(\varepsilon)+T_4^0(\varepsilon) A_{\Lip}
\end{equation}
\end{prop}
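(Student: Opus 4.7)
The plan is to estimate the Lipschitz-in-$x$ norm of $\Phi h$ by a direct difference calculation on the explicit formula $\Phi h(b,x) = (1+xh_b(x))^2 Q_b(x+x^2 h_b(x))$ from \eqref{eq_shift_operator_normalized_form}, using only the bounds $||h||_{C,\varepsilon}\leq A_C$ and $\Lip_{x,\varepsilon} h\leq A_{\Lip}$ supplied by $h\in N$, together with the uniform finiteness of $||Q||_C$ and $\Lip_x Q$ on $\mathbb{T}^d\times[0,1]$ (the latter coming from the smoothness of $Q_b$ in $x$ and continuity in $b$, $\mathbb{T}^d$ being compact).

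First I would fix $b\in\mathbb{T}^d$ and $x,y\in[0,\varepsilon]$ and split $\Phi h(b,x)-\Phi h(b,y)$ into two telescoping pieces: one freezes the prefactor $(1+xh_b(x))^2$ and varies the inner argument of $Q_b$, while the other freezes $Q_b(y+y^2 h_b(y))$ and varies the prefactor. Each piece is then the product of a uniformly bounded factor and a Lipschitz-controlled increment. For the inner-argument difference I expand $x^2 h_b(x)-y^2 h_b(y) = x^2(h_b(x)-h_b(y))+(x^2-y^2)h_b(y)$ and use $|x^2-y^2|\leq 2\varepsilon|x-y|$; for the prefactor difference I apply the identity $a^2-b^2=(a+b)(a-b)$ with $a=1+xh_b(x)$, $b=1+yh_b(y)$ to reduce it to $xh_b(x)-yh_b(y) = x(h_b(x)-h_b(y))+(x-y)h_b(y)$, handled by the same decomposition.

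Dividing by $|x-y|$, taking the supremum and collecting terms according to the power of $A_{\Lip}$ yields the claimed form $T_3(\varepsilon)+T_4^0(\varepsilon)A_{\Lip}$. The crucial observation is that every $A_{\Lip}$-term originates from a difference $h_b(x)-h_b(y)$, which by the decompositions above is multiplied either by $x$ or by $x^2$, and hence carries at least one explicit factor of $\varepsilon$; this forces $T_4^0(0)=0$. Degree-counting: the $A_{\Lip}$-free part inherits a cubic of the form $(1+\varepsilon A_C)^2(1+2\varepsilon A_C)$ from the $\Lip_x Q$ contribution, so $T_3$ has degree $3$; the $A_{\Lip}$-coefficient carries $(1+\varepsilon A_C)^2\varepsilon^2$, giving $T_4^0$ degree $4$. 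I expect the only real obstacle to be the bookkeeping — arranging the telescoping so that no constant term leaks into $T_4^0$ and verifying that the $\varepsilon$-degrees come out exactly as asserted.
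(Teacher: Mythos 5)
Your proposal is correct and follows essentially the same route as the paper: a direct triangle-inequality/telescoping estimate on the explicit formula \eqref{eq_shift_operator_normalized_form}, using only $||h||_{C,\varepsilon}\leq A_C$, $\Lip_{x,\varepsilon}h\leq A_{\Lip}$, and the same key decomposition $x^2h_b(x)-y^2h_b(y)=x^2\left(h_b(x)-h_b(y)\right)+(x^2-y^2)h_b(y)$ (the paper's \eqref{eq_13}), which is exactly what forces every $A_{\Lip}$-term to carry a factor of $\varepsilon$ and gives $T_4^0(0)=0$. The only difference is bookkeeping — the paper expands $(1+xh)^2$ into three terms before telescoping, while you keep it as a prefactor and use the difference of squares — and both organizations yield the stated degrees $3$ and $4$.
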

We postpone the proof to the Appendix.

From here we see that there exists a constant $A_{\Lip}$ such that for $\varepsilon$ small enough, say $\varepsilon<\varepsilon_{\Lip}$, Lipschitz constant of the image of any function $h \in N$ is bounded by $A_{\Lip}$:

 $$\Lip_{x,\varepsilon} [L \Phi h] \leq A_{\Lip}.$$
We can assume that $\varepsilon_{\Lip} < \varepsilon_C$.

What is left is to estimate $ ||L \Phi h||_{[\alpha], \varepsilon}$: for this, we will need the bounds on how operators $L$ and $\Phi$ behave on the space of $\alpha$-H\"older functions separately.

For the shift operator in Appendix, Section \ref{section_appendix} we will prove

\begin{prop}\label{prop_6}
If $h \in \mathcal{H^{\alpha}}_{\varepsilon}$ then $\Phi h \in \mathcal{H^{\alpha}}_{\varepsilon}$ as well. And, moreover, for $h \in N$, there exist polynomials $\tilde{T}_2(\varepsilon)$ and $\tilde{T}_4^0(\varepsilon), \tilde{T}_4^0(\varepsilon)(0)=0$ of degrees $2$ and $4$ correspondingly such that
\begin{equation}\label{eq_12}
\left|\left|\
\Phi h
\right|
\right|_{[\alpha], \varepsilon} \leq \tilde{T}_4^0(\varepsilon) A_{\alpha}+ \tilde{T}_2(\varepsilon)
\end{equation}
\end{prop}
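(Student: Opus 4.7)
The plan is to expand the difference $(\Phi h)_{b_1}(x) - (\Phi h)_{b_2}(x)$ via the explicit expression \eqref{eq_shift_operator_normalized_form}, $\Phi h = (1+xh)^2\, Q(b, x+x^2 h)$, and estimate it by a standard product-telescoping decomposition
\[
[P_1 - P_2]\, Q(b_1, y_1) \;+\; P_2\, [Q(b_1, y_1) - Q(b_2, y_2)],
\]
where $P_j := (1 + x h_{b_j}(x))^2$ and $y_j := x + x^2 h_{b_j}(x)$. Each factor then admits an a priori estimate from the three uniform bounds carried by $h \in N$ (namely $\|h\|_{C,\varepsilon} \le A_C$, $\Lip_{x,\varepsilon} h \le A_{\Lip}$, $\|h\|_{[\alpha],\varepsilon} \le A_\alpha$) together with the boundedness, fiberwise $\alpha$-H\"older continuity in $b$ (constant $C_Q$), and fiberwise Lipschitz continuity in $x$ of $Q$ (constant $\Lip_x Q$), all of which are inherited from the assumptions on $f$ via the regularity proposition stated in Section \ref{section_homological}.

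For the prefactor I would use the factorization $P_1 - P_2 = x(h_{b_1}-h_{b_2})(2 + x(h_{b_1}+h_{b_2}))$ to bound $|P_1 - P_2| \le \varepsilon(2 + 2\varepsilon A_C)\, A_\alpha\, \|b_1-b_2\|^\alpha$, together with $|Q(b_1, y_1)| \le \|Q\|_C$. For the $Q$-difference I would split once more,
\[
Q(b_1, y_1) - Q(b_2, y_2) = \bigl[Q(b_1, y_1) - Q(b_2, y_1)\bigr] + \bigl[Q(b_2, y_1) - Q(b_2, y_2)\bigr],
\]
dominating the first bracket by $C_Q \|b_1-b_2\|^\alpha$ and the second by $\Lip_x Q \cdot |y_1-y_2| \le \Lip_x Q \cdot \varepsilon^2 A_\alpha \|b_1-b_2\|^\alpha$ (since $|y_1-y_2| = x^2 |h_{b_1}-h_{b_2}|$). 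Multiplying by $P_2 \le (1+\varepsilon A_C)^2$ and collecting, the seminorm $\|\Phi h\|_{[\alpha], \varepsilon}$ is bounded by
\[
(1+\varepsilon A_C)^2 C_Q \;+\; \Bigl(\varepsilon(2+2\varepsilon A_C)\|Q\|_C + \varepsilon^2(1+\varepsilon A_C)^2 \Lip_x Q\Bigr) A_\alpha,
\]
which identifies $\tilde{T}_2(\varepsilon) := (1+\varepsilon A_C)^2 C_Q$ (degree $2$) and $\tilde{T}_4^0(\varepsilon) := \varepsilon(2+2\varepsilon A_C)\|Q\|_C + \varepsilon^2(1+\varepsilon A_C)^2 \Lip_x Q$ (degree $4$, with $\tilde{T}_4^0(0) = 0$), as claimed by \eqref{eq_12}.

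The argument is essentially triangle-inequality bookkeeping, so no substantive obstacle arises; the main task is simply keeping track of the $\varepsilon$-factors. The structural point worth emphasizing is that the factor of $x$ appearing in both $P_j$ and $y_j$ --- a consequence of the convention $\bar{h}_b(x) = x^2 h_b(x)$, which forces a double zero of the correction at $x = 0$ --- is precisely what forces every $A_\alpha$-proportional contribution to carry an extra factor of $\varepsilon$, thereby guaranteeing $\tilde{T}_4^0(0) = 0$. This vanishing is what will later allow us to shrink $\varepsilon$ in Lemma \ref{lem_shift} so that the $\tilde{T}_4^0(\varepsilon) A_\alpha$ contribution to $\|L\Phi h\|_{[\alpha], \varepsilon}$ is absorbed and invariance of $N$ under $L\Phi$ in the H\"older coordinate is preserved.
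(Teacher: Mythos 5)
Your proposal is correct and follows essentially the same route as the paper: a triangle-inequality/telescoping decomposition of $(1+xh_{b_1})^2 Q_{b_1}-(1+xh_{b_2})^2 Q_{b_2}$ into a $C_Q$-part and an $A_\alpha$-part (with the $\Lip_x Q$ term carrying the factor $x^2\le\varepsilon^2$), yielding $\tilde T_2(\varepsilon)=C_Q(1+\varepsilon A)^2$ and a $\tilde T_4^0$ identical to the paper's; keeping $(1+xh)^2$ as a single factor and writing $P_1-P_2=x(h_{b_1}-h_{b_2})(2+x(h_{b_1}+h_{b_2}))$ instead of expanding the square first is only a cosmetic difference.
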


While proving Lemma \ref{lem_homological}, we have deduced the bound \eqref{eq_Holder_estimate_for_homological_operator} on H\"older norm of normalized homological operator with $L_C, L_{[\alpha]}, L_{\Lip}$ being some fixed constants defined by \eqref{eq_10}:

\begin{equation}\label{eq_11}
||Lh||_{[\alpha], \varepsilon} \leq L_C ||h||_{C,\varepsilon} + L_{[\alpha]} ||h||_{[\alpha], \varepsilon} + L_{\Lip} \Lip_{x, \varepsilon} h
\end{equation}

Let us now combine \eqref{eq_12} and \eqref{eq_11} for $h:=\Phi h$ to get the bound for $ ||L \Phi h||_{[\alpha], \varepsilon}$. Here we will be using Propositions \ref{prop_lip_calc} and \ref{prop_6} as well as inequality \eqref{eq_6}
to get the bounds on different norms of $\Phi h$ in the space $\mathcal{M}_{\varepsilon}$.

\begin{multline}
||L \Phi h||_{[\alpha], \varepsilon} \leq L_C \left|\left|\Phi h\right|\right|_{C, \varepsilon}+L_{[\alpha]}\left|\left|\Phi h\right|\right|_{[\alpha],\varepsilon}+L_{\Lip} \Lip_{x, \varepsilon} \Phi h \leq\\ \leq L_C ||Q||_C (1+\varepsilon A_C)^2 + L_{[\alpha]}\left(\tilde{T}_2^0(\varepsilon)A_{\alpha}+\tilde{T}_2(\varepsilon)\right)+
L_{\Lip} \left(T_2(\varepsilon)+T_4^0(\varepsilon)A_{\Lip} \right)
\end{multline}

So we see that there exist polynomials $Q_2^0(\varepsilon), Q_4(\varepsilon)$ such that $\deg Q_2^0=2, Q_2^0(0)=0, \deg Q_4(\varepsilon)=4$ and

\begin{equation}\label{eq_9}
||L \Phi h||_{[\alpha], \varepsilon} \leq A_{\alpha} Q_2^0(\varepsilon)+Q_4(\varepsilon)
\end{equation}

So for $\varepsilon$ small enough, $\varepsilon<\varepsilon_{[\alpha]}$, and for some $A_{\alpha}>0$ the right-hand side of inequality \eqref{eq_9} can be made less than $A_{\alpha}$. We can take $\varepsilon_{[\alpha]}<\varepsilon_{\Lip}$. By taking $\varepsilon=\varepsilon_{[\alpha]}$ we obtain the desired preservation of $N$ by operator $L \Phi$. This space is obviously closed in $\mathcal{N}$.

\hfill $\Box$
\section{Proof of Lemma \ref{lem_contraction}: contraction property}\label{section_contraction}
Since operator $L$ is linear and uniformly bounded by \eqref{eq_5} in the continuous norm, the only thing to prove is that normalized shift operator $\Phi$ is strongly contracting in this norm, i.e. for any $\varepsilon$ small enough there exists some constant $\nu=\nu(\varepsilon) \in (0,1)$ such that for any $h,g \in \mathcal{N}$

\begin{equation}
||\Phi h - \Phi g||_{C, \varepsilon} \leq \nu ||h-g||_{C, \varepsilon}
\end{equation}

\emph{Proof.}

Suppose $h, g \in \mathcal{M}$ and define $\bar{h}, \bar{g} \in \mathcal{M}^2$ by $\bar{h}_b(x)=x^2 h(b,x),\bar{g}_b(x)=x^2 g_b(x)$. Also denote $Q_h=Q\left(b, x+\bar{h}_b(x)\right)$.
\begin{multline}
\left|\left|
\Phi h - \Phi g
\right|\right|_{C, \varepsilon}=
\left|\left|
(1+xh_b(x))^2Q_h-(1+xg_b(x))^2Q_g)
\right|\right| \leq\\ \leq
\left|\left|
Q_h-Q_g
\right|\right|_{C, \varepsilon}+\left|\left|
2xh_b(x)Q_h-2xg_b(x)Q_g
\right|\right|_{C, \varepsilon}+
\left|\left|
x^2h^2_b(x) Q_h - x^2 g^2_b(x) Q_g
\right|\right|_{C, \varepsilon} \leq\\
\leq \Lip_x Q ||\bar{h}-\bar{g}||_{C, \varepsilon} + 2 \varepsilon ||h-g||_{C, \varepsilon} ||Q||_C + 2 \varepsilon A ||Q_h-Q_g||_{C, \varepsilon}
 +\varepsilon^2 \left|\left|(h^2-g^2) Q_h+g^2(Q_h-Q_g)\right|\right|_{C, \varepsilon} \leq \\
\left|\left|
h-g
\right|\right|_{C, \varepsilon}
\left(
\varepsilon^2 \Lip_x Q + 2 \varepsilon ||Q||_C + 2 \varepsilon^2 \Lip_x Q A
\right)
+ \varepsilon^2 \left(2 A ||h-g||_{C, \varepsilon} ||Q||_C + A^2 \Lip_x Q \varepsilon^2 ||h-g||_{C, \varepsilon}\right)=\\=
\left|\left|
h-g
\right|\right|_{C, \varepsilon} o(\varepsilon).
\end{multline}
Hence operator $\Phi$ is strongly contracting. And since from \eqref{eq_marie} for any function $h \in \mathcal{N}$ the norm $||Lh||_{C,\varepsilon} \leq \frac{D}{1-q} ||h||_{C, \varepsilon}$, applying this to $\Phi h$ with the fact of the strong contraction property for $\Phi$ we get the strong contraction property for $L \Phi$.
\hfill $\Box$
\section{Appendix: Proof of Theorem \ref{thm:new:3} and other calculations}\label{section_appendix}
In the appendix we will prove the technical propositions stated above.
\subsection{H\"older properties of some auxiliary functions}
First let us prove

\textbf{Proposition 2.}
\emph{Function }$\Pi_n(b)$ \emph{defined as a product of }$\lb$ i\emph{n the first} $n$ \emph{points of the orbit of a linear diffeomorphism }$A$, \emph{see }\eqref{eq_definition_pi}, \emph{is} \emph{H\"older with the exponent} $\alpha$ \emph{and }
\begin{equation}||\Pi_n||_{[\alpha]} \leq
\frac{C_{\lambda} \theta^n}{(\mu^{\alpha}-1)q}
\end{equation}
\emph{where} $C_{\lambda}$ \emph{is H\"older constant for }$\lambda$, $\theta$ \emph{is defined in} \eqref{eq_theta}. \emph{Here and below} $\alpha$ \emph{is given by} \eqref{eq:new:alpha} \emph{and }$\mu$ is the largest magnitude of eigenvalues of $A$.

Proof of Proposition 2:
\begin{multline}
\left|
\Pi_n(b_1)-\Pi_n(b_2)
\right| =
\left|
\prod_{k=0}^{n-1} \lambda_{A^{k} b_1}-\prod_{k=0}^{n-1} \lambda_{A^k b_2}
\right|=
\left|
\lambda_{b_1}-\lambda_{b_2}
\right|
\times
\left|
\prod_{k=1}^{n-1} \lambda_{A^k b_1}
\right|+
\left|\lambda_{b_2}\right|
\left|
\Pi_{n-1}(A b_1) - \Pi_{n-1} (A b_2)
\right| \leq \ldots \\ \leq
q^{n-1} C_{\lambda} \sum_{k=0}^{n-1} \left|\left|A^k b_1-A^k b_2 \right|\right|^{\alpha}
\leq q^{n-1} C_{\lambda} \frac{\mu^{n \alpha}-1}{\mu^{\alpha}-1} \go  \leq \frac{C_{\lambda} \theta^n}{(\mu^{\alpha}-1)q} \go
\end{multline}
\hfill $\Box$

\textbf{Proposition 3.}
\emph{Function }$P_n(b)$ defined by $P_n(b):=\frac{\Pi_n(b)}{\lambda_{A^n b}}$ \emph{is H\"older with exponent }$\alpha$\emph{ and }
\begin{equation}||P_n||_{[\alpha]} \leq D^2 C_{\lambda} B \theta^n
\end{equation}\emph{where} $B$ \emph{depends only on the initial map, the precise formula for }$B$ \emph{is given below, see }\eqref{eq_B}.

\begin{proof}
\begin{multline}
\left|
P_n(b_1)-P_n(b_2)
\right| =
\left|
\frac{\Pi_n(b_1) \lambda_{A^n b_2}-\Pi_n(b_2) \lambda_{A^n b_1}}{\lambda_{A^n b_1} \lambda_{A^n b_2}}
\right|  \leq
D^2
\left|
\lambda_{A^n b_2}\prod_{k=0}^{n-1} \lambda_{A^{k} b_1} \right.-
\left. \lambda_{A^n b_1}\prod_{k=0}^{n-1} \lambda_{A^{k} b_2}
\right|=\\=
D^2
\left|
\left(\lambda_{A^n b_2}-\lambda_{A^n b_1}\right) \Pi_n(b_1)+\Pi_{n+1}(b_1)
\right.
-\left.\left(\lambda_{A^n b_1}
-\lambda_{A^n b_2}\right) \Pi_n(b_2)-\Pi_{n+1}(b_2))
\right| \leq \\
\leq
|\Pi_{n+1} (b_1)-\Pi_{n+1}(b_2)|+\left|\lambda_{A^n b_1}-\lambda_{A^n b_2}| |\Pi_n(b_1)-\Pi_n(b_2)
\right| \leq \\
\leq D^2 \left[
\frac{C_{\lambda} \theta^{n+1}}{(\mu^{\alpha}-1)q}+2 q^n C_{\lambda} \mu^{n \alpha}
\right] \go \leq D^2 C_{\lambda} B \theta^n \go
\end{multline}
where $B$ doesn't depend on anything but initial skew product:
\begin{equation}\label{eq_B}
B(\theta,\mu,\alpha,q)=\frac{\theta}{(\mu^{\alpha}-1)q}+2
\end{equation}
\end{proof}

\textbf{Proposition 4.}
\emph{Function} $\theta_{2,k}(b_1,b_2)$ \emph{defined as}
$\theta_{2,k}(b_1,b_2)=|P_k(b_2)| |Q_{k,1}-Q_{k,2}|$ \emph{is H\"older with }$\alpha$ \emph{as exponent and}
\begin{equation}
||\theta_{2,k}||_{[\alpha]} \leq \theta^k D
\left(  C_Q + q^{k-1} \Lip_x Q \frac{C_{\lambda}}{\mu^{\alpha}-1}\right)
\end{equation}
\emph{Here }$Q_{k,1}=Q \circ F_0^{k}(b_1,x)$ and $Q_{k,2}=Q \circ F_0^{k}(b_2,x)$, \emph{and the definition of} $P_k(b)$ \emph{was reminded in Proposition} $2$ \emph{above}.
\begin{proof}
We use the results of Proposition \ref{proposition_Pi_n_is_Holder} in the following chain of inequalities.
\begin{multline}\label{eq_theta_two}
\theta_{2,k} \leq q^k D \left| Q_{A^k b_1}( \Pi_k(b_1)x)-Q_{A^k b_2}( \Pi_k(b_2)x)\right|
\leq  q^k D
\left| Q_{A^k b_1}(\Pi_k(b_1)x)-Q_{A^k b_2}( \Pi_k(b_1)x)\right|+\\+q^k D \left|
Q_{A^k b_2}(\Pi_k(b_2)x)-Q_{A^k b_2}(\Pi_k(b_1)x)
\right|
\leq q^k D C_Q \mu^{k \alpha} \go +q^k D \Lip_x Q ||\Pi_k||_{\mathcal{H}^{\alpha}}\go \leq\\
\theta^k D
\left(  C_Q + q^{k-1} \Lip_x Q \frac{C_{\lambda}}{\mu^{\alpha}-1}\right) \go
\end{multline}
\end{proof}

\textbf{Proposition 5.}
\emph{Operator} $L$ \emph{is bounded in the Lipschitz norm: there exists a constant $\Lip_x L$ such that for any }$h \in \mathcal{M}$ \emph{holds }$$\Lip_x \left( Lh \right) \leq \Lip_x L \times  \Lip_x h.$$
\emph{Moreover, if}  $h(b,\cdot) \in C^l$ \emph{for any} $b \in \Td$, \emph{then} $Lh$ \emph{has the same smoothness as well and}
\begin{equation}
\left|\left|(Lh)^{(l)}\right|\right|_C \leq C_k(L) \left|\left|h^{(l)}\right|\right|_C.
\end{equation}

\begin{proof}
Using the explicit formula for the solution \eqref{eq_solution_of_homological_equation_formal_series}, as well as bounds \eqref{eq_obvious_2} and \eqref{eq_obvious}, we have:
\begin{multline}
\sup_{x,y \in [0,1]} \left| \frac{L h (b,x)-Lh(b,y)}{x-y}\right|=\sup_{x,y \in [0,1]} \left|
\sum_{k=0}^{\infty} P_k(b) \frac{h \circ F_0^k(b,x)-h \circ F_0^k(b,y)}{x-y}
\right| \leq\\
\leq
\sup_{x,y \in [0,1]}
\sum_{k=0}^{\infty}
P_k(b)\frac{\Lip_x h |\Pi_k(b) x -\Pi_k(b) y|}{|x-y|} =
\Lip_x h \frac{D}{1-q^2}.
\end{multline}

The bounds for the derivatives are obtained analgously by differentiating term by term the series \eqref{eq_solution_of_homological_equation_formal_series}:
\begin{align}
(Lh)^{(l)} =-\sum_{k=0}^{\infty} P_k(b) \Pi_k^l(b) h^{(l)} \circ F_0^k.
\end{align}
Therefore,
\begin{equation}
\left|\left|(Lh)^{(l)}\right|\right|_C \leq \frac{D}{1-q^{l+1}}\left|\left|h^{(l)}\right|\right|_C .
\end{equation}
\end{proof}

\textbf{Proposition 6.}
\emph{There exist polynomials} $T_3(\varepsilon)$\emph{ and} $T_4^0(\varepsilon)$ \emph{of degrees respectively} $3$ \emph{and} $4$ \emph{such that} $T_4^0(0)=0$ \emph{and for any} $h \in N$\emph{ holds}
\begin{equation}
\Lip_{x,\varepsilon} [\Phi h] \leq T_3(\varepsilon)+T_4^0(\varepsilon) A_{\Lip}
\end{equation}

Proof of Proposition \ref{prop_lip_calc}:
The proof of this proposition deals with an expression for $\Lip_{x,\varepsilon} \Phi h$ which is given by
\begin{multline}
\sup_{x,y \in [0,\varepsilon]}\frac{\left|
(1+xh_b(x))^2 Q(b, x+\bar{h}_b(x))-(1+yh_b(y))^2 Q(b, y+\bar{h}_b(y))
\right|}{|x-y|}
\end{multline}
Since in this proposition the base coordinate $b$ is fixed and $x$ is changing we will permit to ourselves not to write the $b$ index and just suppose that $Q(x)=Q(b, x+\bar{h}_b(x))$ as well as $h(x)=h_b(x)$.
The bound is a triangle inequality formula:
\begin{multline}\label{eq_8}
\Lip_{x,\varepsilon} \Phi h \leq \supe \left|
\frac{Q(x) -Q(y)}{x-y}
\right|+2 \supe \left|
\frac{x h(x) Q(x) -y h(y) Q(y)}{x-y}
\right|+\\+
\supe \left|
\frac{x^2 h(x) Q(x)-y^2 h(y) Q(y)}{x-y}
\right| \leq \Lip_x Q (1+\Lip_{x,\varepsilon} \bar{h})+\\ + 2 \supe \left|
\frac{x h(x) \left(Q(x)-Q(y)\right)}{x-y}
\right|
+ 2 \supe \left|
\frac{x Q(y) (h(x)-h(y))}{x-y}
\right|
+2 \sup_{y \in [0,\varepsilon]}\left|
Q(y) h(y)
\right|+\\+\supe \left|
\frac{x^2 h(x) \left(Q(x)-Q(y)\right)}{x-y}
\right|+
\supe
\left|
\frac{Q(y) \left[  x^2\left(h(x)-h(y)\right)+(x^2-y^2)h(y)\right]}{x-y}
\right| \leq \\
\leq
\Lip_x Q (1+\Lip_{x,\varepsilon} \bar{h})+2 \varepsilon A_C \Lip_x Q \Lip_{x,\varepsilon} \bar{h}+ 2 \varepsilon ||Q||_C A_{\Lip} +\\+
 2 ||Q||_C A_C + \varepsilon^2 A_C \Lip_x Q \Lip_{x,\varepsilon} \bar{h}+||Q||_C (\varepsilon^2 A_{\Lip}+ 2 \varepsilon A_C)
\end{multline}
Let us note that
\begin{multline}\label{eq_13}
\Lip_{x,\varepsilon} \bar{h}=\supe \left| \frac{x^2 h(x)-y^2 h(y)}{x-y} \right| \leq \supe \left|\frac{x^2 (h(x)-h(y))}{x-y}\right|+\supe \left| \frac{h(y) (x^2-y^2)}{x-y}\right| \leq\\\leq A_{\Lip} \varepsilon^2 + A_C 2 \varepsilon
\end{multline}
After substitution of $\Lip_{x,\varepsilon} \bar{h}$ in \eqref{eq_8} by \eqref{eq_13} we have the result with
\begin{align*}
T_3(\varepsilon)=2A_C^2 \Lip_x Q \varepsilon^3 + 4 A_C^2 \Lip_x Q  \varepsilon^2+2 A_C (\Lip_x Q +||Q||_C)\varepsilon+ \Lip_x Q + 2 ||Q||_C A_C \\
T_4^0(\varepsilon)=\Lip_x Q A_C \varepsilon^4 + 2 A_C \Lip_x Q \varepsilon^3 + \Lip_x Q \varepsilon^2 + 2 ||Q||_C \varepsilon
\end{align*}

\hfill $\Box$

Now let us prove the analogous proposition for the H\"older norm of the operator $\Phi$:

\textbf{Proposition 7.}
\emph{If} $h \in \mathcal{H^{\alpha}}_{\varepsilon}$ \emph{then }$\Phi h \in \mathcal{H^{\alpha}}_{\varepsilon}$ \emph{as well. And, moreover, for} $h \in N$, \emph{there exist polynomials} $\tilde{T}_2(\varepsilon)$ \emph{and }$\tilde{T}_4^0(\varepsilon), \tilde{T}_4^0(\varepsilon)(0)=0$ \emph{of degrees} $2$ \emph{and }$4$ \emph{correspondingly such that}
\begin{equation}
\left|\left|\
\Phi h
\right|
\right|_{[\alpha], \varepsilon} \leq \tilde{T}_4^0(\varepsilon) A_{\alpha}+ \tilde{T}_2(\varepsilon)
\end{equation}

\begin{proof}

To estimate H\"older norm of the shift operator, we need some more triangle inequalities.

\begin{multline}
|\Phi h(b_1, x)-\Phi h (b_2,x)|=|(1+x h_{b_1})^2 Q_{b_1} - (1+xh_{b_2})^2 Q_{b_2}| \leq \\
\leq |Q_{b_1}-Q_{b_2}|+ 2 x \left|h_{b_1} Q_{b_1}-h_{b_2} Q_{b_2}\right|+ x^2 \left| h_{b_1}^2 Q_{b_1} - h_{b_2}^2 Q_{b_2} \right| \leq \\
\leq \left|
Q(b_1, x+\bar{h}_{b_1})-Q(b_1, x+\bar{h}_{b_2})
\right|+ \left|
Q(b_1, x+\bar{h}_{b_2})-Q(b_2, x+\bar{h}_{b_2})
\right|+ \\+ 2 \varepsilon \left|
h_{b_1} \left( Q(b_1, x+\bar{h}_{b_1})-Q(b_1, x+\bar{h}_{b_2})\right)
\right|+ 2 \varepsilon \left|
h_{b_1} \left(
Q(b_1, x+\bar{h}_{b_2})
 \right. \right. -\\ \left. \left.
-Q(b_2, x+\bar{h}_{b_2})
\right)
\right|
+ 2 \varepsilon \left|
Q_{b_2} (h_{b_1}-h_{b_2})
\right|+\varepsilon^2 h_{b_1}^2 \left|
Q(b_1, x+\bar{h}_{b_1}) - Q(b_1, x+\bar{h}_{b_2})
\right|+\\+ \varepsilon^2 h_{b_1}^2 \left|
Q(b_1, x+\bar{h}_{b_2})-Q(b_2, x+\bar{h}_{b_2})
\right|+\varepsilon^2 |Q_{b_2}| \left|
h_{b_1}^2-h_{b_2}^2
\right| \leq \\
\leq
||b_1-b_2||^{\alpha} \left(
\tilde{T}_4^0(\varepsilon) A_{\alpha}+\tilde{T}_2(\varepsilon)
\right)
\end{multline}
where
\begin{equation}
\tilde{T_4}^0 (\varepsilon)=\Lip_x Q \varepsilon^2 + 2 \varepsilon^3 A \Lip_x Q + 2 \varepsilon ||Q||_C + \varepsilon^4 A^2 \Lip_Q + 2 ||Q||_C A \varepsilon^2
\end{equation}
and
\begin{equation}
\tilde{T}_2(\varepsilon)=C_Q+ 2 \varepsilon A C_Q+ \varepsilon^2 A^2 C_Q
\end{equation}
\end{proof}

All the propositions stated above are proven. This completes the proof of our main result -- Theorem \ref{thm:new:2}.

Now we are ready to prove that the conjugacy is smooth in fiber variable, and H\"older with its derivatives in base variables.

Proof of Theorem \ref{thm:new:3}:

The proof of a smooth version of Theorem \ref{thm:new:2} is analogous to the proof of the latter theorem.
Here we will give a sketch of the proof: we will only show that the conjugacy $H$ is $(k-2)$-- smooth with respect to the fiber variable. The proof of the fact that its fiber derivatives are now H\"older on $b$ is analogous to the proof of the H\"older property fot the function $H$ itself and we don't give it here.

 The idea is to change the space $\mathcal{N}$ in an appropriate way. For some constants $A_0, \ldots, A_l>0$ and $\varkappa_0, \ldots, \varkappa_l>0$ let us define the space

\begin{equation}
\mathcal{N}_l:=\left\{
h(\cdot, b) \in C^l([0,\varepsilon]) : ||h||_C \leq A_0, \ldots , \left|\left| h^{(l)}\right|\right|_C \leq A_l
\right\}
\end{equation}
 with the norm
 \begin{equation}\label{eq_norm}
 ||h||_*=\varkappa_0 ||h||_C + \ldots + \varkappa_l ||h^{(l)}||_C.
 \end{equation}\

 We have now to prove the analogues of Lemmas \ref{lem_homological}, \ref{lem_contraction} and \ref{lem_shift} above, and then follow the argument in Theorem \ref{thm:new:2}. The homological and shift operators will stay the same although the functional spaces in which they act will be smaller, and the metric will be not continuous but a smooth one.

\textbf{Lemma 1} (smooth case)
Operator $L$ is bounded in the norm \eqref{eq_norm}.
\begin{proof}
\begin{multline}
||L h||_{*} = \sum_{j=0}^l \varkappa_j || (L h)^{(j)}||_C \leq \sum_{j=0}^l \frac{D \varkappa_j}{1-q^{j+1}}||h^{(j)}||_C
\leq\frac{D}{1-q} \sum_{j=0}^{\infty} \varkappa_j ||h^{(j)}||_C = \frac{D}{1-q}||h||_{*}
\end{multline}
\end{proof}


For the space $\mathcal{N}_l$ to map to itself by $L \Phi$, we should choose constants $A_0, A_1, \ldots , A_l$ appropriately. For $L \Phi$ to be contracting in the space, we should appropriately choose $\varkappa_0, \ldots, \varkappa_l$. Let us show that these two choices can be made without complications and the analogues of Lemmas \ref{lem_contraction} and \ref{lem_shift} hold.

In what concerns the operator $\Phi$, we will use its presentation  \eqref{eq_shift_operator_normalized_form} and calculate the derivatives for $k=0, \ldots, l:$ by the Leibnitz rule:

\begin{equation}
(\Phi h)^{(k)}=\sum_{j=0}^{k} C_k^j ((1+x h(b,x))^2) ^{(j)}Q^{(k-j)}(b, x+\bar{h})
\end{equation}

The explicit form of the right-hand side is not as important as a fact that it can be written as a sum of polynomials in derivatives of $h, \bar{h}$ and $Q$. Indeed, there exist polynomials $\tau_0, \ldots, \tau_l$ and $\sigma_0, \ldots, \sigma_l$ such that

\begin{align}\label{eq_14}
(\Phi h)^{(k)}=\sum_{j=0}^{k} C_k^j \tau_j(x,h, \ldots, h^{(j)}) \sigma_j \left(x, \bar{h}, \ldots, \bar{h}^{(k-j)}, Q\left(b, x+\bar{h}), \ldots,
\right.\right.
 \left. \left.
 Q^{(k-j)}(b,x+\bar{h}
 \right)\right)
\end{align}

We will estimate the continuous norm of  the right-hand side of \eqref{eq_14} in $\mathbb{T}^d \times [0,\varepsilon]$. So we will have that for some polynomials $T_j$ and $S_j$ there is a bound
\begin{align}
||L\Phi h^{(k)}||_{C, \varepsilon} \leq \sum_{j=0}^k C_k^j T_j(\varepsilon, A_0, \ldots, A_j) S_j \left(\varepsilon, A_0, \ldots, A_{k-j}, ||Q||_C, \ldots, \right. \left.
||Q^{(k-j)}||_C\right)
\end{align}

Note that the coefficient in front of $A_k$ in this expression is a polynomial that has no free term. Indeed, $A_k$ comes only from $T_k$ or $S_0$: in both of the cases $A_k$ is multiplied by at least one $\varepsilon$. Hence we need to find $A_0, \ldots A_l$ such that $l+1$ equations hold for some polynomials $P_k, P_k^0, P_k^0(0)=0$:

\begin{equation}\label{eq:alej}
P_k^0(\varepsilon) A_k+P_k(\varepsilon) C(A_0, \ldots, A_{k-1}) \leq A_k, k=0, \ldots, l
\end{equation}

First we take $\varepsilon$ such that all polynomials $P_k^0(\varepsilon)<1$. Then we satisfy the equations \eqref{eq:alej} one by one, starting from $k=0$ by choosing $A_k$ one by one, starting with $A_0$ and by increasing the index.

Now we have to prove that operator $L \Phi$ is contracting in the space $\mathcal{N}$ if  $\varkappa_0, \ldots \varkappa_l$ are properly chosen.

One can show that
\begin{multline}
||L \Phi h - L \Phi g||_{*} \leq \\ \frac{D}{1-q} \sum_{j=0}^l \varkappa_j ||
\sum_{k=0}^{j} C_j^k \left(
\tau_k(x,h, \ldots h^{(k)}) \sigma_k \left(x, \bar{h}, \ldots, \bar{h}^{(j-k)}, Q(b,x+h), \ldots, \right. \right. \\\left. \left. Q^{(j-k)}(b,x+h)\right)-\right.\left.
\tau_k\left(x,g, \ldots g^{(k)}) \sigma_k(x, \bar{g}, \ldots, \bar{g}^{(j-k)}, Q(b,x+g), \ldots,\right.\right. \\
\left.\left.  Q^{(j-k)}(b,x+g)\right)
\right)
||_{C, \varepsilon} \leq
\sum_{j=0}^l ||h^{(k)}-g^{(k)}||_{C, \varepsilon} \left(
\varkappa_k U_k^0(\varepsilon)+\sum_{j={k+1}}^l U_j(\varepsilon) \varkappa_j
\right)
\end{multline}
for some polynomials $U_j, U_j^0$. For the right-hand side to be less than $\xi ||h-g||_{*}$ for some $\xi<1$ the following system should be satisfied:

\begin{align*}
U_0^0 (\varepsilon)+ \frac{\varkappa_1}{\varkappa_0} U_0(\varepsilon) + \ldots \frac{\varkappa_l}{\varkappa_0} U_0(\varepsilon) \leq \xi\\
U_1^0 (\varepsilon)+ \frac{\varkappa_2}{\varkappa_1} U_0(\varepsilon) + \ldots \frac{\varkappa_l}{\varkappa_1} U_0(\varepsilon) \leq \xi \\
\ldots \\
U_{l-1}^0 (\varepsilon) + \frac{\varkappa_l}{\varkappa_{l-1}} U_{l-1}(\varepsilon) \leq \xi\\
U_l^0(\varepsilon) \leq \xi
\end{align*}

One can choose $\varepsilon$ in such a way that $U^0_k(\varepsilon)<\xi$. Then, the last inequality in the list is true, by taking any $\varkappa_l$ and $\varkappa_{l-1}$ big enough, we satisfy the before-last inequality and we proceed in staisfying these inequalities from the last one till the first one.

So, we obtain a contracting operator. We haven provent that in the space $\mathcal{N}_l$ of functions defined in a neighborhood of the base with a metric chosen appropriately, there is a contracting operator $L \Phi$. Its fixed point is the needed conjugacy which will be sufficiently smooth on the fiber variable $x$.

\hfill $\Box$
\section{Acknowledgements}
We would like to thank Ilya Schurov and Stas Minkov for their attentive reading of the preliminary versions of this article and their remarks on the presentation. Olga Romaskevich is supported by UMPA ENS Lyon (UMR 5669 CNRS), the LABEX MILYON (ANR-10-LABX-0070) of Universit\'{e} de Lyon, within the program "Investissements d'Avenir" (ANR-11-IDEX-0007) operated by the French National Research Agency (ANR) as well as by the French-Russian Poncelet laboratory (UMI 2615 of CNRS and Independent University of Moscow). Both authors are supported by RFBR project 13-01-00969-a.

\end{document}